\newcommand{\tr}{^{\sf T}}
\newcommand{\m}[1]{{\bf{#1}}}
\newcommand{\g}[1]{\bm #1}
\newcommand{\C}[1]{{\cal {#1}}}
\title{
Convergence rate for a Radau collocation method
applied to unconstrained optimal control
\thanks{
August 17, 2015.
Revised September 12, 2015.
The authors gratefully acknowledge support by
the Office of Naval Research under grants N00014-11-1-0068 and
N00014-15-1-2048, and by the National Science Foundation under
grants DMS-1522629 and CBET-1404767.}
}
\author{
William W. Hager\thanks{{\tt hager@ufl.edu},
http://people.clas.ufl.edu/hager/,
PO Box 118105,
Department of Mathematics,
University of Florida, Gainesville, FL 32611-8105.
Phone (352) 294-2308. Fax (352) 392-8357.}
\and
Hongyan Hou\thanks{{\tt hongyan388@gmail.com},
        Chemical Engineering,
        Carnegie Mellon University,
        5000 Forbes Avenue, Pittsburgh, PA 15213.}
\and
Anil V. Rao\thanks{{\tt anilvrao@ufl.edu},
        http://www.mae.ufl.edu/rao
        Department of Mechanical and Aerospace Engineering,
        P.O. Box 116250, Gainesville, FL 32611-6250.
        Phone:(352) 392-0961. Fax:(352) 392-7303}
}
\begin{document}
\maketitle

\begin{abstract}
A local convergence rate is established for an orthogonal collocation
method based on Radau quadrature applied to an unconstrained optimal
control problem.
If the continuous problem has a sufficiently smooth solution
and the Hamiltonian satisfies a strong convexity condition,
then the discrete problem possesses a local minimizer
in a neighborhood of the continuous solution, and as the number
of collocation points increases, the discrete solution convergences
exponentially fast in the sup-norm to the continuous solution.
An earlier paper analyzes an orthogonal collocation method based on
Gauss quadrature, where neither end point of the problem domain is a
collocation point.
For the Radau quadrature scheme, one end point is a collocation point.
\end{abstract}

\begin{keywords}
Radau collocation method, convergence rate, optimal control,
orthogonal collocation
\end{keywords}

\pagestyle{myheadings} \thispagestyle{plain}
\markboth{W. W. HAGER, H. HOU, AND A. V. RAO}
{RADAU COLLOCATION}

\section{Introduction}
A convergence rate is established for an orthogonal collocation method
applied to an unconstrained control problem of the form
\begin{equation}\label{P}
\begin{array}{clc}
\mbox {minimize} &C(\m{x}(1))&\\
\mbox {subject to} &\dot{\m{x}}(t)=
\m{f(x}(t), \m{u}(t)),&t\in[-1,1],\\
&\m{x}(-1)=\m{x}_0,&
\end{array}
\end{equation}
where the state ${\m x}(t)\in \mathbb{R}^n$,
$\dot{\m x}\equiv\displaystyle\frac{d}{dt}{\m x}$,
the control ${\m u}(t)\in {\mathbb R}^m$,
${\m f}: {\mathbb R}^n \times {\mathbb R}^m\rightarrow {\mathbb R}^n$,
$C: {\mathbb R}^n \rightarrow {\mathbb R}$,
and ${\m x}_0$ is the initial condition, which we assume is given.
Assuming the dynamics $\dot{\m{x}}(t)= \m{f(x}(t), \m{u}(t))$ is nice enough,
we can solve for the state $\m{x}$ as a function of the control $\m{u}$,
and the control problem reduces to an unconstrained minimization over $\m{u}$.

Let $\C{P}_N$ denote the space of polynomials of degree at most $N$
defined on the interval $[-1, +1]$, and let $\C{P}_N^n$ denote the
$n$-fold Cartesian product $\C{P}_N \times \ldots \times \C{P}_N$.
We analyze a discrete approximation to (\ref{P}),
introduced in \cite{GargHagerRao11a,GargHagerRao10a}, of the form
\begin{equation}\label{D}
\begin{array}{cl}
\mbox {minimize} &C(\m{x}(1))\\
\mbox {subject to} &\dot{\m{x}}(\tau_i)=
\m{f(x}(\tau_i), \m{u}_i),\quad 1 \le i \le N,\\
&\m{x}(-1)=\m{x}_0, \quad \m{x} \in \C{P}_N^n.
\end{array}
\end{equation}
At the collocation points $\tau_i$, $1 \le i \le N$, the
equation should be satisfied.
The control approximation at time $\tau_i$ is $\m{u}_i$.
We focus on the Radau quadrature points satisfying
\[
-1 < \tau_1 < \tau_2 < \ldots < \tau_N = +1 .
\]
The analysis we give also applies to the flipped Radau scheme
obtained by reversing the sign for each collocation point.
In (\ref{D}) the dimension of $\C{P}_N$ is $N+1$, while there are
$N+1$ equations in (\ref{D}) corresponding to the
collocated dynamics at $N$ points and the initial condition.
When the discrete dynamics is nice enough,
we can solve for the discrete state $\m{x} \in \C{P}_N^n$ as a function
of the discrete controls $\m{u}_i$, $1 \le i \le N$, and the discrete
approximation reduces to an unconstrained minimization over the discrete
controls.

In an earlier paper \cite{HagerHouRao15b} we analyzed a scheme based
on Gauss quadrature, where the collocation points lie in the interior
of the interval $[-1, +1]$.
The Gauss scheme is easier to analyze than the Radau scheme of this
paper due to the symmetry of the Gauss collocation points,
and the fact that the none of the collocation points lies at
an end of the problem domain.
For the Radau scheme,
symmetry is broken and the polynomials in the
discrete adjoint equation have degree $N-1$ compared to the degree $N$
polynomials used for the state approximation.
As will be seen, the presence of the Radau collocation point
at the end of the interval $[-1, +1]$ leads to the embedding of
the terminal adjoint condition of the discrete problem
into the discrete adjoint dynamics.
Despite these differences in the analysis and despite the fact that
Gauss quadrature has a higher degree of accuracy than Radau
quadrature, the convergence rate obtained for the
Radau scheme is exactly the same as that of the Gauss scheme.
Moreover, in numerical experiments with test problems where an
exact solution is known, the observed error in the state, control, and adjoint
for the Radau scheme is very similar to the observed error for the Gauss scheme.
The fact that a Radau quadrature point can be placed at the end point
of the interval leads to a simpler implementation of terminal constraints
and terminal cost.
And the Radau scheme is easier than the Gauss scheme to extend to an
$hp$-framework
\cite{DarbyHagerRao11,DarbyHagerRao10,LiuHagerRao15,PattersonHagerRao14}
where the interval $[-1, +1]$ is partitioned into a mesh
and a different polynomial is employed in each mesh interval.

The analysis in this and the earlier paper \cite{HagerHouRao15b}
needs further extensions in order to handle Lobatto collocation schemes
such as those in \cite{Elnagar1,Fahroo2} where $\tau_1 = -1$ and $\tau_N = +1$.
Although the Gauss and Radau scheme have similar errors,
the Lobatto scheme can converge at
a slower rate, as observed in \cite{GargHagerRao10a}.
An advantage of the Lobatto scheme is that the value of the optimal control
is estimated at the initial point $t = -1$ and the terminal point $t = +1$.
Moreover, both initial and terminal constraints are easier to implement
in the Lobatto framework.
A consistency result for a Lobatto collocation scheme applied to optimal
control is given in \cite{GongRossKangFahroo08}.
Other quadrature points that have been exploited in the optimal
control literature include the Chebyshev quadrature points
\cite{Elnagar4, FahrooRoss02}, and the extrema of Jacobi polynomials
\cite{Williams1}.

Our goal is to show that if $(\m{x}^*, \m{u}^*)$ is a local minimizer
for (\ref{P}), then the discrete problem (\ref{D}) has a local minimizer
$(\m{x}^N, \m{u}^N)$ that converges exponentially fast in $N$ to
$(\m{x}^*, \m{u}^*)$ at the collocation points.
Convergence rates have been obtained previously when the approximating
space consists of piecewise polynomials as in
\cite{DontchevHager93, DontchevHager97,DontchevHagerVeliov00,
DontchevHagerMalanowski00, Hager99c, Kameswaran1, Reddien79}.
In these earlier results,
convergence is achieved by letting the mesh spacing tend to zero.
In our results, on the other hand, convergence is achieved by letting
$N$, the degree of the approximating polynomials, tend to infinity.

Let $\C{C}^k (\mathbb{R}^n)$ denote the space of $k$ times
continuously differentiable functions
$\m{x}: [-1, +1] \rightarrow \mathbb{R}^n$ with the sup-norm
$\| \cdot \|_\infty$ given by
\begin{equation}\label{csup}
\|\m{x}\|_\infty = \sup \{ |\m{x} (t)| : t \in [-1, +1] \} ,
\end{equation}
where $| \cdot |$ is the Euclidean norm.
It is assumed that (\ref{P}) has a local minimizer
$(\m{x}^*, \m{u}^*)$ in $\C{C}^1 (\mathbb{R}^n) \times \C{C}^0 (\mathbb{R}^m)$.
Given $\m{y} \in \mathbb{R}^n$, the ball with center $\m{y}$ and radius
$\rho$ is denoted
\[
\C{B}_\rho (\m{y}) = \{ \m{x} \in \mathbb{R}^n :
|\m{x} - \m{y}| \le \rho \} .
\]
It is assumed that there exists an open set
$\Omega \subset \mathbb{R}^{m+n}$ and $\rho > 0$ such that
\[
\C{B}_\rho (\m{x}^*(t),\m{u}^*(t)) \subset \Omega \mbox{ for all }
t \in [-1, +1].
\]
Moreover, the first two derivative of $f$ and $C$ are
continuous on the closure of
$\Omega$ and on $\C{B}_\rho (\m{x}^*(1))$ respectively.

Let $\g{\lambda}^*$ denote the solution of the
linear costate equation
\begin{equation}\label{costate}
\dot{\g{\lambda}}^*(t)=-\nabla_xH({\m x}^*(t), {\m u}^*(t), {\g \lambda}^*(t)),
\quad {\g \lambda}^*(1)=\nabla C({\m x}^*(1)),
\end{equation}
where $H$ is the Hamiltonian defined by
$H({\m x}, {\m u}, {\g \lambda}) ={\g\lambda}\tr {\m f}({\m x}, {\m u})$.
Here $\nabla C$ denotes the gradient of $C$.
By the first-order optimality conditions (Pontryagin's minimum principle),
we have
\begin{equation} \label{controlmin}
\nabla_u H({\m x}^*(t), {\m u}^*(t), {\g \lambda}^*(t)) = \m{0}
\end{equation}
for all $t \in [-1, +1]$.

Since the discrete collocation problem (\ref{D}) is finite dimensional,
the first-order optimality conditions (Karush-Kuhn-Tucker conditions)
imply that when a constraint qualification holds
\cite{NocedalWright2006}, the gradient of the Lagrangian vanishes.
By the analysis in \cite{GargHagerRao11a, GargHagerRao10a},
the gradient of the Lagrangian vanishes if and only if there exists
$\g{\lambda} \in \C{P}_{N-1}^n$ such that
\begin{eqnarray}
\g{\lambda}_0 &=& \g{\lambda}(-1)
\label{dstart} \\[.05in]
\dot{\g \lambda}(\tau_i) &=&
-\nabla_x H\left({\m x} (\tau_i),{\m u}_i, {\g \lambda} (\tau_i) \right),
\quad 1 \leq i < N,  \label{dcostate} \\[.05in]
\dot{\g \lambda}(1) &=&
-\nabla_x H\left({\m x} (1),{\m u}_N, {\g \lambda} (1) \right)
+ (\g{\lambda}(1) - \nabla C (\m{x}(1))/\omega_N,
\label{dterminal}\\[.05in]
\m{0} &=&
\nabla_u H\left({\m x}(\tau_i),{\m u}_i, {\g \lambda} (\tau_i) \right),
\quad 1\leq i\leq N, \label{dcontrolmin}
\end{eqnarray}
where $\omega_i$ is the Radau quadrature weight associated with $\tau_i$, and
$\g{\lambda}_0$ is the multiplier associated with the initial condition
$\m{x}(-1) = \m{x}_0$ in (\ref{D}).
Note that in \cite{GargHagerRao10a},
(\ref{dstart}) is written in the form
\[
\nabla C(\m{x}(1)) = \g{\lambda}_0 - \sum_{i=1}^N
\omega_i \nabla_x H({\m x} (\tau_i),{\m u}_i, {\g \lambda} (\tau_i) ) .
\]
However, utilizing (\ref{dcostate}), (\ref{dterminal}),
and the fundamental theorem of calculus, this reduces to the more
compact form (\ref{dstart}).

In comparing the first-order conditions for Radau collocation to the
first-order conditions for Gauss collocation \cite{HagerHouRao15b},
the differences are that in Gauss collocation,
$\g{\lambda} \in \C{P}_{N}^n$ not $\C{P}_{N-1}^n$.
Moreover, in Gauss collocation, the terminal condition for the
discrete adjoint is simply $\g{\lambda}(1) = \nabla C(\m{x}(1))$,
while in Radau collocation, the discrete costate dynamics and the terminal
condition are mixed together as in (\ref{dterminal}).

The assumptions utilized in the convergence analysis are the following:

\begin{itemize}
\item[(A1)]
$\m{x}^*$ and $\g{\lambda}^* \in \C{C}^{\eta+1}$ for some $\eta \ge 3$.
\item[(A2)]
For some $\alpha > 0$,
the smallest eigenvalue of the Hessian matrices
\[
\nabla^2 C(\m{x}^*(1)) \quad \mbox{and} \quad
\nabla^2_{(x,u)} H(\m{x}^* (t),
\m{u}^* (t), \g{\lambda}^* (t) )
\]
is greater than $\alpha$, uniformly for $t \in [-1, +1]$.
\item[(A3)]
The Jacobian of the dynamics satisfies
\[
\|\nabla_x \m{f} (\m{x}^*(t), \m{u}^*(t))\|_\infty \le 1/4
\quad \mbox{and} \quad
\|\nabla_x \m{f} (\m{x}^*(t), \m{u}^*(t))\tr\|_\infty \le 1/4
\]
for all $t \in [-1, +1]$ where $\| \cdot \|_\infty$ is the matrix
sup-norm (largest absolute row sum), and the Jacobian
$\nabla_x \m{f}$ is an $n$ by $n$  matrix whose $i$-th row is
$(\nabla_x f_i)\tr$.
\end{itemize}
\smallskip

The smoothness assumption (A1) is used to obtain a bound for the
accuracy with which the interpolant of the continuous state $\m{x}^*$
satisfies the discrete dynamics.
The coercivity assumption (A2) ensures that the solution of the
discrete problem is a local minimizer.
The condition (A3) enters into the analysis of stability for
the perturbed dynamics;
this condition can be eliminated when the problem domain $[-1, +1]$
is partitioned into subintervals with a different polynomial on each
subinterval
\cite{DontchevHager93, DontchevHager97, DontchevHagerMalanowski00,
DontchevHagerVeliov00, Hager99c, Kameswaran1, Reddien79}.
For the global polynomials analyzed in this paper,
(A3) could be replaced by any condition that ensures
stability of the linearized dynamics.

In addition to the 3 assumptions, the analysis utilizes 4 properties
of the Radau collocation scheme.
Let $\tau_0 = -1$, a noncollocated point, and define
\begin{equation}\label{Ddef}
D_{ij} = \dot{L}_j (\tau_i), \;
1 \le i \le N, \;
0 \le j \le N, \quad
\mbox{where }
L_j (\tau) := \prod^{N}_{\substack{i=0\\i\neq j}}
\frac{\tau-\tau_i}{\tau_j-\tau_i} .
\end{equation}
Here the dot denotes differentiation, and
$\m{D}$ is a differentiation matrix in the sense that
$(\m{Dp})_i = \dot{p} (\tau_i)$, $1 \le i \le N$,
whenever $p \in \C{P}_N$ is the polynomial that satisfies
$p(\tau_j) = p_j$ for $0 \le j \le N$.
The submatrix $\m{D}_{1:N}$ consisting of the tailing $N$ columns of $\m{D}$
has the following properties:
\smallskip
\begin{itemize}
\item [(P1)]
$\m{D}_{1:N}$ is invertible and
$\| \m{D}_{1:N}^{-1}\|_\infty = 2$.
\item [(P2)]
If $\m{W}$ is the diagonal matrix containing the Radau
quadrature weights $\g{\omega}$ on the diagonal, then the rows of the
matrix $[\m{W}^{1/2} \m{D}_{1:N}]^{-1}$ have Euclidean norm bounded by
$\sqrt{2}$.
\end{itemize}
\smallskip
The fact that $\m{D}_{1:N}$ is invertible is established in
\cite[Prop. 1]{GargHagerRao11a}, and a formula for the elements of
$\m{D}_{1:N}^{-1}$ is given in \cite[equation (53)]{GargHagerRao10a}.
From the formula, the elements in the last row of
$\m{D}_{1:N}^{-1}$ are the Radau quadrature weights, which are
positive and sum to 2.
Although elements in the earlier rows of $\m{D}_{1:N}^{-1}$ can
be either positive or negative, we find numerically that their
absolute sum is always less than 2.
Similarly, the elements in the last row of
$[\m{W}^{1/2} \m{D}_{1:N}]^{-1}$ are the square roots of
the Radau quadrature weights.
Hence, the Euclidean norm of the last row of
$[\m{W}^{1/2} \m{D}_{1:N}]^{-1}$ is $\sqrt{2}$.
Numerically, we find that Euclidean norm of the earlier rows is
always less than $\sqrt{2}$.

Let $\m{D}^\ddagger$ by the $N$ by $N$ matrix defined by
\[
D_{ij}^\ddagger = - \left( \frac{\omega_j}{\omega_i} \right)
D_{ji}, \quad 1 \le i \le N, \quad 1 \le j \le N .
\]
The matrix $\m{D}^\ddagger$ arises in the analysis of the costate equation.
In Section~4.2.1 of \cite{GargHagerRao10a}, we introduce a matrix
$\m{D}^\dagger$ which is a differentiation matrix for the collocation
points $\tau_i$, $1 \le i \le N$.
That is, if $p$ is a polynomial of degree at most $N-1$ and
$\m{p}$ is the vector with components $p(\tau_i)$, $1 \le i \le N$,
then $(\m{D}^\dagger \m{p})_i = \dot{p}(\tau_i)$.
The matrix $\m{D}^\ddagger$ only differs from $\m{D}^\dagger$ in a
single entry: $D^\ddagger_{NN} = D^\dagger_{NN} - 1/\omega_N$.
As a result,
\begin{equation}\label{h282}
(\m{D}^{\ddag} \m{p})_i = \dot{p}(\tau_i), \quad
1 \le i < N, \quad
(\m{D}^{\ddag} \m{p})_N = \dot{p}(\tau_N) - p(1)/\omega_N.
\end{equation}
If $\m{D}^{\ddag} \m{p} = \m{0}$, then
$\dot{p}(\tau_i) = 0$ for $i < N$ by (\ref{h282}).
Since $\dot{p}$ has degree $N-2$ and it vanishes at $N-1$ points,
$\dot{p}$ is identically zero and $p$ is constant.
By the final equation in (\ref{h282}), $p(1) = 0$ when
$\m{D}^\ddag \m{p} = \m{0}$, which implies that $p$ is identically zero.
This shows that $\m{D}^\ddagger$ is invertible.
We find that $\m{D}^\ddagger$ has the following properties:
\smallskip
\begin{itemize}
\item [(P3)]
$\m{D}^\ddagger$ is invertible and
$\| (\m{D}^\ddagger)^{-1}\|_\infty \le 2$.
\item [(P4)]
The rows of the
matrix $[\m{W}^{1/2} \m{D}^\ddagger]^{-1}$ have Euclidean norm bounded by
$\sqrt{2}$.
\end{itemize}
\smallskip
In Proposition~\ref{deriv_exact} at the end of the paper,
an explicit formula is given for the inverse of $\m{D}^\ddagger$.
However, it is not clear from the formula that
$\| (\m{D}^\ddagger)^{-1}\|_\infty$ is bounded by 2.
Numerically, we find that the norms in (P3) and (P4) achieve
their maximum in the first row of the matrix,
and these norms increase monotonically towards the given bounds.
Properties (P1)--(P4) differ from the assumptions (A1)--(A3)
in the sense that the properties seem to hold for any choice of $N$,
although a proof is missing, while (A1)--(A3) only hold
for certain control problems.
In the analysis of the Gauss scheme \cite{HagerHouRao15b},
properties (P3) and (P4) followed immediately from (P1) and (P2)
since it could be shown that the discrete costate matrix was related
to the state differentiation matrix through an exchange operation.
However, due to the asymmetry of the Radau collocation points and the
lower degree of the polynomials in the discrete adjoint system
(\ref{dstart})--(\ref{dcontrolmin}), the relation between the
state and costate matrices for the Radau scheme is not clear.
Nonetheless, the bounds in (P3) and (P4) are observed to be the same as the
bounds in (P1) and (P2).

If $\m{x}^N$ is a solution of (\ref{D}) associated
with the discrete controls $\m{u}_i$, $1 \le i \le N$, and
if $\g{\lambda}^N \in \C{P}_{N-1}^n$ satisfies
(\ref{dstart})--(\ref{dcontrolmin}), then we define
\[
\begin{array}{lllllll}
\m{X}^N &= [ &\m{x}^N(\tau_0), & \m{x}^N(\tau_1), & \ldots,
& \m{x}^N(\tau_N) &], \\
\m{X}^* &= [ &\m{x}^*(\tau_0), & \m{x}^*(\tau_1), & \ldots,
& \m{x}^*(\tau_N) &], \\
\m{U}^N &= [ && \m{u}_1, & \ldots, & \m{u}_N &], \\
\m{U}^* &= [ && \m{u}^*(\tau_1), & \ldots, & \m{u}^*(\tau_N)& ], \\
\g{\Lambda}^N &= [ &\g{\lambda}^N(\tau_0), & \g{\lambda}^N(\tau_1),
& \ldots, & \g{\lambda}^N(\tau_N) &], \\
\g{\Lambda}^* &= [ &\g{\lambda}^*(\tau_0), & \g{\lambda}^*(\tau_1),
& \ldots, & \g{\lambda}^*(\tau_N) &].
\end{array}
\]
For any of the discrete variables, we define a discrete
sup-norm analogous to the continuous sup-norm in (\ref{csup}).
For example, if $\m{U}^N \in \mathbb{R}^{mN}$ with $\m{U}_i \in \mathbb{R}^m$,
then
\[
\|\m{U}^N\|_\infty = \sup \{ |\m{U}_i| : 1 \le i \le N \}.
\]
The following convergence result is established:
\smallskip
\begin{theorem}\label{maintheorem}
If $(\m{x}^*, \m{u}^*)$ is a local minimizer for the continuous problem
$(\ref{P})$ and both {\rm (A1)--(A3)} and {\rm (P1)--(P4)} hold,
then for $N$ sufficiently large with $N > \eta+1$,
the discrete problem $(\ref{D})$ has a
local minimizer $(\m{X}^N, \m{U}^N)$ for which
\begin{equation}\label{maineq}
\max \left\{ \|{\bf X}^N-{\bf X}^*\|_\infty ,
\|{\bf U}^N-{\bf U}^*\|_\infty,
\|{\g \Lambda}^N-{\g \Lambda}^*\|_\infty \right\} \leq cN^{2-\eta},
\end{equation}
where $c$ is independent of $N$.
\end{theorem}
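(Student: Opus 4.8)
The plan is to recast the problem as an abstract nonlinear operator equation $\C{T}_N(\m{z}) = \m{0}$, where $\m{z}$ collects the discrete state, control, and costate values at the nodes, and then apply an implicit-function-theorem / Newton-Kantorovich type argument around the interpolant of the continuous solution. First I would define the reference point $\m{z}^*$ by sampling the continuous optimal triple $(\m{x}^*, \m{u}^*, \g{\lambda}^*)$ at the collocation points $\tau_i$, and view the first-order system \eqref{dstart}--\eqref{dcontrolmin} as the equation characterizing a zero of $\C{T}_N$. The two ingredients a fixed-point argument of this kind requires are (i) a bound on the \emph{residual} $\|\C{T}_N(\m{z}^*)\|$, i.e.\ how badly the sampled continuous solution fails to satisfy the discrete optimality conditions, and (ii) an invertibility-with-uniform-bound statement for the linearization $\C{T}_N'$ near $\m{z}^*$.

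For step (i), the residual measures the defect in the collocated dynamics \eqref{dcostate}, the costate equation, and the terminal relation \eqref{dterminal}. Because $\m{x}^*, \g{\lambda}^* \in \C{C}^{\eta+1}$ by (A1), I would use the standard interpolation/quadrature error estimates for the Radau points: the differentiation matrix $\m{D}$ reproduces derivatives of polynomials exactly, so the defect is controlled by the error in approximating $\dot{\m{x}}^*$ and $\dot{\g{\lambda}}^*$ by the derivative of the degree-$N$ interpolant. These orthogonal-polynomial interpolation bounds yield a residual of size $O(N^{2-\eta})$ in the relevant weighted norm, which is exactly the target rate in \eqref{maineq}; the extra two powers of $N$ relative to the raw smoothness $\eta$ come from differentiating the interpolant and from the norm conversions.

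For step (ii), I would linearize the discrete system and show the linearization is a uniformly bounded perturbation of a decoupled block operator built from the differentiation matrices. This is where properties (P1)--(P4) do the heavy lifting: (P1)--(P2) control the inverse of $\m{D}_{1:N}$ governing the state dynamics, while (P3)--(P4) control the inverse of $\m{D}^\ddagger$ governing the embedded costate dynamics and terminal condition \eqref{dterminal}. The coercivity assumption (A2) supplies positive definiteness of the relevant Hessian blocks, so the control equation \eqref{dcontrolmin} can be solved and eliminated, and (A3) guarantees that the Jacobian coupling between state and costate is a small (norm $\le 1/4$) perturbation that does not destroy invertibility. Combining these, I would establish a uniform-in-$N$ bound on $(\C{T}_N')^{-1}$ in the weighted sup-norm. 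The weighting by $\m{W}^{1/2}$ in (P2) and (P4), together with the Euclidean-to-sup-norm comparisons, is what lets the bounds survive the passage to the sup-norm appearing in \eqref{maineq}.

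The main obstacle I expect is step (ii), specifically propagating the \emph{weighted} $\ell^2$ stability bounds from (P2) and (P4) into a genuine \emph{sup-norm} stability estimate that is uniform in $N$. Radau quadrature weights near the endpoint $\tau_N = 1$ behave like $O(1/N^2)$, so converting between the Euclidean norms in (P2)/(P4) and the $\|\cdot\|_\infty$ of \eqref{maineq} introduces $N$-dependent factors that must be tracked carefully and absorbed into the residual rate rather than degrading it. The asymmetry of the Radau scheme and the mixing of the terminal condition into the dynamics in \eqref{dterminal}, noted in the introduction, make this bookkeeping more delicate than in the Gauss case. Once a uniform bound on $(\C{T}_N')^{-1}$ is in hand, the contraction argument closes: the residual $O(N^{2-\eta})$ times the bounded inverse produces a zero $\m{z}^N$ of $\C{T}_N$ within $O(N^{2-\eta})$ of $\m{z}^*$, and (A2) certifies that this stationary point is in fact a local minimizer, giving \eqref{maineq}.
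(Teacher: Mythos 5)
Your overall architecture --- a residual estimate at the sampled continuous solution plus a uniform bound on the inverse of the linearization, closed by a Newton--Kantorovich argument --- is exactly the paper's (Proposition~\ref{prop} applied with $\g{\theta}^*=(\m{X}^*,\m{U}^*,\g{\Lambda}^*)$), and your step (i) matches Lemma~\ref{residuallemma}. The gap is in step (ii). You propose to treat the linearized optimality system as ``a uniformly bounded perturbation of a decoupled block operator,'' with (A3) making ``the Jacobian coupling between state and costate a small perturbation.'' But (A3) only bounds $\m{A}_i=\nabla_x\m{f}$; the state--costate coupling in $\nabla\C{T}^*$ also enters through $\m{Q}_i=\nabla_{xx}H$, $\m{S}_i=\nabla_{xu}H$, and $\m{T}=\nabla^2C$, none of which is assumed small, so eliminating $\m{U}_i$ via $\m{R}_i^{-1}$ leaves a fully coupled discrete two-point boundary-value system whose uniform invertibility cannot be obtained by a Neumann-series perturbation of $\m{D}_{1:N}$ and $\m{D}^\ddagger$. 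The paper's key device, absent from your sketch, is variational: $\nabla\C{T}^*[\m{X},\m{U},\g{\Lambda}]=\m{y}$ is identified with the KKT system of the strongly convex quadratic program (\ref{QP}), so (A2) gives existence and uniqueness for every $\m{y}$ (hence invertibility of the square system), and testing the optimality condition with the minimizer itself gives the coercivity inequality $\alpha(\|\m{X}\|_\omega^2+\|\m{U}\|_\omega^2)\le\C{Q}(\m{X},\m{U})$, which yields the weighted $\ell^2$ bound $\|\m{X}\|_\omega+\|\m{U}\|_\omega\le c\|\m{y}\|_\infty$ of Lemma~\ref{l2}.

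Your concern that the $\ell^2$-to-sup-norm conversion loses powers of $N$ through the small Radau weights is resolved differently than you anticipate: one never inverts $\m{W}$. Writing $\bar{\m{D}}^{-1}\m{B}\m{U}=[(\m{W}^{1/2}\m{D}_{1:N})^{-1}\otimes\m{I}_n]\m{B}\m{U}_\omega$ with $(\m{U}_\omega)_i=\sqrt{\omega_i}\,\m{U}_i$, the $\sqrt{\omega_i}$ factors cancel exactly, and (P2) bounds each row of $(\m{W}^{1/2}\m{D}_{1:N})^{-1}$ by $\sqrt{2}$ in Euclidean norm, so Cauchy--Schwarz gives $\|\bar{\m{D}}^{-1}\m{B}\m{U}\|_\infty\le\sqrt{2}\,\|\m{B}\|_2\|\m{U}\|_\omega$ with no $N$-dependence; the analogous step with (P4) and the identity $\m{D}^{\ddagger\,-1}\m{e}_N=-\omega_N\m{1}$ handles the costate equation together with the embedded terminal term $\m{T}\m{X}_N/\omega_N$ in (\ref{dterminal}). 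The sup-norm bound on the control then comes from solving $\m{S}_i\tr\m{X}_i+\m{R}_i\m{U}_i+\m{B}_i\tr\g{\Lambda}_i=\m{y}_{6i}$ pointwise using (A2), not from eliminating the control at the outset. Without the QP/coercivity step your argument cannot produce the uniform-in-$N$ constant $\mu$ required by Proposition~\ref{prop}, so as written the proof does not close.
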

\smallskip

The discrete problem provides an estimate for optimal control at
$\tau_N = +1$, however, there is no discrete control at $\tau_0 = -1$.
Due to the strong convexity assumption (A2),
an estimate for the discrete control at $t = -1$ can be obtained from
the minimum principle (\ref{controlmin}) since we have estimates
for the discrete state and costate at $\tau_0 = -1$.
Alternatively, polynomial interpolation could be used to obtain
estimates for the optimal control at $t = -1$.

The paper is organized as follows.
In Section~\ref{abstract} the discrete optimization
problem (\ref{D}) is reformulated as a nonlinear system of equations
obtained from the first-order optimality conditions,
and a general approach to convergence analysis is presented.
Section~\ref{residual} obtains an estimate for how closely the
solution to the continuous problem satisfies the first-order optimality
conditions for the discrete problem.
Section~\ref{inverse} proves that the linearization of the discrete
control problem around a solution of the continuous problem is invertible.
Section~\ref{omegabounds} establishes an $L^2$ stability property
for the linearization, while
Section~\ref{inftybounds} strengthens the norm to $L^\infty$.
This stability property is the basis for the proof of Theorem~\ref{maintheorem}.
A numerical example illustrating the exponential convergence
result is given in Section~\ref{numerical}.

{\bf Notation.}
The meaning of the norm $\| \cdot \|_\infty$ is based on context.
If $\m{x} \in \C{C}^0 (\mathbb{R}^n)$, then
$\|\m{x}\|_\infty$ denotes the maximum of $|\m{x}(t)|$ over
$t \in [-1, +1]$, where $| \cdot|$ is the Euclidean norm.
If $\m{A} \in \mathbb{R}^{m \times n}$, then $\|\m{A}\|_\infty$
is the largest absolute row sum (the matrix norm induces by the
$\ell_\infty$ vector norm).
If $\m{U} \in \mathbb{R}^{mN}$ is the discrete control with
$\m{U}_i \in \mathbb{R}^m$, then $\|\m{U}\|_\infty$ is the maximum
of $|\m{U}_i|$, $1 \le i \le N$.
The dimension of the identity matrix $\m{I}$ is often clear from context;
when necessary, the dimension of $\m{I}$ is specified by a subscript.
For example, $\m{I}_n$ is the $n$ by $n$ identity matrix.
$\nabla C$ denotes the gradient, a column vector, while
$\nabla^2 C$ denotes the Hessian matrix.
Throughout the paper, $c$ denotes a generic constant which has different
values in different equations.
The value of this constant is always independent of $N$, the degree
of the polynomials used in the discrete approximation of the state.
$\m{1}$ denotes a vector whose entries are all equal to one, while
$\m{0}$ is a vector whose entries are all equal to zero, their
dimension should be clear from context.
If $\m{D}$ is the differentiation matrix introduced in (\ref{Ddef}), the
$\m{D}_j$ is the $j$-th column of $\m{D}$ and
$\m{D}_{i:j}$ is the submatrix formed by columns $i$ through $j$.

\section{Abstract setting}
\label{abstract}
Given $\m{x} \in \C{P}_N^n$ and $\m{u} \in \mathbb{R}^{mN}$
that are feasible in (\ref{D}),
define $\m{X}_i = \m{x}(\tau_i)$ and $\m{U}_i = \m{u}_i$.
As shown in \cite{GargHagerRao10a}, the discrete problem (\ref{D})
can be reformulated as the nonlinear programming problem
\begin{equation}\label{nlp}
\begin{array}{ll}
\mbox {minimize} &C(\m{X}_{N})\\[.05in]
\mbox {subject to} &\sum_{j=0}^{N}{D}_{ij}{\m X}_j
={\m f}({\m X}_i,{\m U}_i), \quad 1\leq i\leq N,\\[.05in]
&\m{X}_0=\m{x}_0.
\end{array}
\end{equation}
Also, \cite{GargHagerRao10a} shows that the equations obtained by
setting the gradient of the Lagrangian to zero are equivalent
to the system of equations
\begin{eqnarray}
\g{\Lambda}_0 &=& \nabla C(\m{X}_N) + \sum_{i=1}^N
\omega_i \nabla_x H(\m{X}_i, \m{U}_i, \g{\Lambda}_i), \label{Dstart} \\
\sum_{j=1}^{N}{D}_{ij}^\ddagger{\g \Lambda}_j &=&
-\nabla_x H\left({\m X}_i,{\m U}_i, {\g \Lambda}_i\right),
\quad 1 \leq i < N, \label{Dadjoint} \\
\sum_{j=1}^{N}{D}_{Nj}^\ddagger{\g \Lambda}_j &=&
-\nabla_x H\left({\m X}_N,{\m U}_N, {\g \Lambda}_N\right)
-\nabla C (\m{X}_N)/\omega_N,
\label{D_1adjoint}\\
\m{0} &=&
\nabla_u H\left({\m X}_i,{\m U}_i, {\g \Lambda}_i\right),
\quad 1\leq i\leq N,  \label{Dcontrolmin}
\end{eqnarray}
where ${\g \Lambda_0}$ is the multiplier associated with the equation
${\m X}_0 = \m{x}_0$ and $\g{\Lambda}_i$ for $i > 0$ is related to the
Lagrange multiplier $\g{\lambda}_i$ associated with the $i$-th
equation in the discrete dynamics by
\begin{equation}\label{eq12}
\g{\Lambda}_i = \g{\lambda}_i /\omega_i .
\end{equation}

The first-order optimality conditions for the nonlinear program (\ref{nlp})
consist of the equations (\ref{Dstart})--(\ref{Dcontrolmin}),
and the constraints in (\ref{nlp}).
This system can be written as $\C{T}(\m{X}, \m{U}, \g{\Lambda}) = \m{0}$ where
\[
(\C{T}_1, \C{T}_2, \ldots, \C{T}_6) (\m{X}, \m{U}, \g{\Lambda}) \in
\mathbb{R}^{nN} \times \mathbb{R}^n \times \mathbb{R}^n \times
\mathbb{R}^{n(N-1)} \times \mathbb{R}^n \times \mathbb{R}^{mN}.
\]
The 6 components of $\C{T}$ are defined as follows:
\begin{eqnarray*}
\C{T}_{1i}(\m{X}, \m{U}, \g{\Lambda}) &=&
\left( \sum_{j=0}^{N}{D}_{ij}{\bf X}_j \right) -{\bf f}({\bf X}_i,{\bf U}_i),
\quad 1\leq i\leq N ,\\
\C{T}_2(\m{X}, \m{U}, \g{\Lambda}) &=&
{\bf X}_{0}-{\m x}_0, \\
\C{T}_{3}(\m{X}, \m{U}, \g{\Lambda}) &=&
\g{\Lambda}_0 - \nabla C(\m{X}_N) - \sum_{i=1}^N
\omega_i \nabla_x H(\m{X}_i, \m{U}_i, \g{\Lambda}_i),\\
\C{T}_{4i}(\m{X}, \m{U}, \g{\Lambda}) &=&
\left( \sum_{j=1}^{N}{D}_{ij}^\ddagger{\g \Lambda}_j \right) +
\nabla_x H({\bf X}_i,{\bf U}_i, {\g \Lambda}_i),
\quad 1 \leq i < N , \\
\C{T}_5(\m{X}, \m{U}, \g{\Lambda}) &=&
\sum_{j=1}^{N}{D}_{Nj}^\ddagger{\g \Lambda}_j
+\nabla_x H\left({\m X}_N,{\m U}_N, {\g \Lambda}_N\right)
+\nabla C (\m{X}_N)/\omega_N,\\
\C{T}_{6i}(\m{X}, \m{U}, \g{\Lambda}) &=&
\nabla_u H({\bf X}_i, {\bf U}_i, {\g \Lambda}_i),
\quad 1\leq i\leq N .
\end{eqnarray*}

The proof of Theorem~\ref{maintheorem} reduces to a study of solutions
to $\C{T}(\m{X}, \m{U}, \g{\Lambda}) = \m{0}$ in a neighborhood of
$(\m{X}^*, \m{U}^*, \g{\Lambda}^*)$.
Our analysis is based on
\cite[Proposition 3.1]{DontchevHagerVeliov00}, which we simplify
below to take into account the structure of our $\C{T}$.
Other results like this are contained in
Theorem~3.1 of \cite{DontchevHager97},
in Proposition~5.1 of \cite{Hager99c}, and in Theorem~2.1 of \cite{Hager02b}.
\begin{proposition}\label{prop}
Let $\mathcal{X}$ be a Banach space and $\mathcal{Y}$
be a linear normed space with the norms in both spaces denoted
$\|\cdot\|$.
Let $\mathcal{T}$: $\mathcal{X} \longmapsto \mathcal{Y}$ with
$\mathcal{T}$ continuously Fr\'{e}chet differentiable in
$\C{B}_r(\g{\theta}^*)$ for
some $\g{\theta}^* \in \mathcal{X}$ and $r> 0$.
Suppose that
\[
\|\nabla\mathcal{T}(\g{\theta})-\nabla\mathcal{T}(\g{\theta}^*)\|
\leq \varepsilon \mbox{ for all } \g{\theta} \in
{\mathcal B}_r(\g{\theta}^*)
\]
and $\nabla\mathcal{T}(\g{\theta}^*)$ is invertible;
and define $\mu := \|\nabla\mathcal{T}(\g{\theta}^*)^{-1}\|$.
If $\varepsilon\mu < 1$ and
$\left\|\mathcal{T}\left(\g{\theta}^*\right)\right\|
\leq(1-\mu\varepsilon)r/\mu$,
then there exists a unique
$\g{\theta} \in {\mathcal B}_r(\g{\theta}^*)$ such that
$\mathcal{T}(\g{\theta})=\m{0}$.
Moreover, we have the estimate
\begin{equation}\label{abs}
\|\g{\theta}-\g{\theta}^*\|\leq \frac{\mu}{1-\mu\varepsilon}
\left\|\mathcal{T}\left(\g{\theta}^*\right)\right\|.
\end{equation}
\end{proposition}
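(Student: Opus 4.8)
The plan is to recast the equation $\mathcal{T}(\g{\theta}) = \m{0}$ as a fixed-point problem for the Newton-type map
\[
\Phi(\g{\theta}) := \g{\theta} - \nabla\mathcal{T}(\g{\theta}^*)^{-1}\, \mathcal{T}(\g{\theta}),
\]
and then to invoke the Banach fixed-point theorem on the closed ball $\C{B}_r(\g{\theta}^*)$, which is complete since $\mathcal{X}$ is a Banach space. Because $\nabla\mathcal{T}(\g{\theta}^*)$ is invertible, a point $\g{\theta}$ is a fixed point of $\Phi$ if and only if $\mathcal{T}(\g{\theta}) = \m{0}$, so it suffices to produce a unique fixed point of $\Phi$ in the ball.

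First I would show that $\Phi$ is a contraction on $\C{B}_r(\g{\theta}^*)$. For $\g{\theta}_1, \g{\theta}_2 \in \C{B}_r(\g{\theta}^*)$, writing $\mathcal{T}(\g{\theta}_1) - \mathcal{T}(\g{\theta}_2) = \int_0^1 \nabla\mathcal{T}(\g{\theta}_s)(\g{\theta}_1 - \g{\theta}_2)\,ds$ with $\g{\theta}_s := \g{\theta}_2 + s(\g{\theta}_1 - \g{\theta}_2)$, via the fundamental theorem of calculus for Fr\'echet derivatives, and then subtracting the identity $\g{\theta}_1 - \g{\theta}_2 = \nabla\mathcal{T}(\g{\theta}^*)^{-1}\int_0^1 \nabla\mathcal{T}(\g{\theta}^*)(\g{\theta}_1 - \g{\theta}_2)\,ds$, I obtain
\[
\Phi(\g{\theta}_1) - \Phi(\g{\theta}_2) = \nabla\mathcal{T}(\g{\theta}^*)^{-1}\int_0^1 \left[\nabla\mathcal{T}(\g{\theta}^*) - \nabla\mathcal{T}(\g{\theta}_s)\right](\g{\theta}_1 - \g{\theta}_2)\,ds.
\]
Each $\g{\theta}_s$ lies on the segment joining two points of the convex ball $\C{B}_r(\g{\theta}^*)$, hence $\g{\theta}_s \in \C{B}_r(\g{\theta}^*)$ and the hypothesis $\|\nabla\mathcal{T}(\g{\theta}_s) - \nabla\mathcal{T}(\g{\theta}^*)\| \le \varepsilon$ applies. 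Taking norms and using $\mu = \|\nabla\mathcal{T}(\g{\theta}^*)^{-1}\|$ yields $\|\Phi(\g{\theta}_1) - \Phi(\g{\theta}_2)\| \le \mu\varepsilon\,\|\g{\theta}_1 - \g{\theta}_2\|$, and $\mu\varepsilon < 1$ makes $\Phi$ a contraction with modulus $\mu\varepsilon$.

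Next I would verify that $\Phi$ maps $\C{B}_r(\g{\theta}^*)$ into itself. For $\g{\theta}\in\C{B}_r(\g{\theta}^*)$, I split $\Phi(\g{\theta}) - \g{\theta}^* = [\Phi(\g{\theta}) - \Phi(\g{\theta}^*)] + [\Phi(\g{\theta}^*) - \g{\theta}^*]$. The contraction bound gives $\|\Phi(\g{\theta}) - \Phi(\g{\theta}^*)\| \le \mu\varepsilon r$, while $\Phi(\g{\theta}^*) - \g{\theta}^* = -\nabla\mathcal{T}(\g{\theta}^*)^{-1}\mathcal{T}(\g{\theta}^*)$ together with the hypothesis $\|\mathcal{T}(\g{\theta}^*)\| \le (1-\mu\varepsilon)r/\mu$ gives $\|\Phi(\g{\theta}^*) - \g{\theta}^*\| \le \mu\|\mathcal{T}(\g{\theta}^*)\| \le (1-\mu\varepsilon)r$. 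Adding the two bounds produces exactly $\mu\varepsilon r + (1-\mu\varepsilon)r = r$, so $\Phi(\g{\theta}) \in \C{B}_r(\g{\theta}^*)$. The Banach fixed-point theorem then delivers a unique fixed point $\g{\theta}$ in the ball, and hence a unique zero of $\mathcal{T}$ in $\C{B}_r(\g{\theta}^*)$.

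Finally, the error estimate (\ref{abs}) follows from the same decomposition applied at the fixed point: $\g{\theta} - \g{\theta}^* = [\Phi(\g{\theta}) - \Phi(\g{\theta}^*)] + [\Phi(\g{\theta}^*) - \g{\theta}^*]$ gives $\|\g{\theta} - \g{\theta}^*\| \le \mu\varepsilon\,\|\g{\theta} - \g{\theta}^*\| + \mu\|\mathcal{T}(\g{\theta}^*)\|$, and rearranging with $1-\mu\varepsilon > 0$ yields $\|\g{\theta} - \g{\theta}^*\| \le \frac{\mu}{1-\mu\varepsilon}\|\mathcal{T}(\g{\theta}^*)\|$. This argument is essentially a Newton--Kantorovich estimate and presents no deep difficulty; the only mildly delicate point is justifying the integral representation of $\mathcal{T}(\g{\theta}_1)-\mathcal{T}(\g{\theta}_2)$ along the segment, which relies on the convexity of the closed ball and on $\mathcal{T}$ being continuously Fr\'echet differentiable there, after which everything reduces to routine bookkeeping with the constants $\mu$ and $\varepsilon$.
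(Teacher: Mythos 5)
Your argument is correct, and it is essentially the standard contraction-mapping (Newton--Kantorovich) proof; the paper itself does not prove this proposition but instead cites Proposition~3.1 of Dontchev--Hager--Veliov, whose proof rests on the same fixed-point construction, so you have simply supplied the omitted standard argument. The only technical wrinkle is that $\mathcal{Y}$ is assumed to be a normed space, not complete, so the Riemann integral $\int_0^1 \nabla\mathcal{T}(\g{\theta}_s)(\g{\theta}_1-\g{\theta}_2)\,ds$ need not converge in $\mathcal{Y}$; this is repaired at no cost by applying the fundamental theorem of calculus directly to the $\mathcal{X}$-valued map $s \mapsto \Phi(\g{\theta}_s)$ (or by invoking the mean value inequality for $\Phi$, whose derivative satisfies $\|\m{I} - \nabla\mathcal{T}(\g{\theta}^*)^{-1}\nabla\mathcal{T}(\g{\theta}_s)\| \le \mu\varepsilon$), after which your contraction estimate, the self-mapping check, and the derivation of (\ref{abs}) go through verbatim.
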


We apply Proposition~\ref{prop} with
$\g{\theta}^* = (\m{X}^*, \m{U}^*, \g{\Lambda}^*)$ and
$\g{\theta} = (\m{X}^N, \m{U}^N, \g{\Lambda}^N)$.
The key steps in the analysis are the estimation of the residual
$\left\|\mathcal{T}\left(\g{\theta}^*\right)\right\|$,
the proof that
$\nabla\mathcal{T}(\g{\theta}^*)$ is invertible,
and the derivation of a bound for
$\|\nabla\mathcal{T}(\g{\theta}^*)^{-1}\|$ that is independent of $N$.
In our context, the norm on $\C{X}$ is
\begin{equation}\label{Xnorm}
\|\g \theta\|=\|({\bf X},{\bf U}, {\g \Lambda})\|_\infty
=\max\{\|\bf X\|_\infty, \|\bf U\|_\infty,\|\g \Lambda\|_\infty\}.
\end{equation}
For this norm, the left side of (\ref{maineq}) and the left side of (\ref{abs})
are the same.
The norm on $\C{Y}$ enters into the estimation of both the residual
$\|\mathcal{T}(\g{\theta}^*)\|$ in
(\ref{abs}) and the parameter
$\mu := \|\nabla\mathcal{T}(\g{\theta}^*)^{-1}\|$.
In our context,
we think of an element of $\C{Y}$ as a vector with components
$\m{y}_i$, $1 \le i \le 3N + 2$,
where $\m{y}_i \in \mathbb{R}^n$ for $1 \le i \le 2N + 2$ and
$\m{y}_i \in \mathbb{R}^m$ for $i > 2N + 2$.
For example, $\C{T}_1(\m{X},\m{U}, \g{\Lambda}) \in \mathbb{R}^{nN}$
corresponds to the components $\m{y}_i \in \mathbb{R}^n$, $1 \le i \le N$.
For the norm in $\C{Y}$, we take
\begin{equation}\label{Ynorm}
\|\m{y}\|_\infty = \sup \{ |\m{y}_i| : 1 \le i \le 3N + 2 \} .
\end{equation}
%
\section{Analysis of the residual}
\label{residual}
We now establish a bound for $\C{T}(\m{X}^*, \m{U}^*, \g{\Lambda}^*)$,
the residual which appears on the right side of the error bound (\ref{abs}).
This bound for the residual ultimately appears in the right side of
the error estimate (\ref{maineq}).
\smallskip
\begin{lemma}\label{residuallemma}
If {\rm (A1)} holds, then there
exits a constant $c$, independent of $N$, such that
\begin{equation}\label{delta}
\|\mathcal{T}(\m{X}^*, \m{U}^*, \g{\Lambda}^*)\|_\infty \le
c N^{2-\eta}
\end{equation}
for all $N > \eta+1$.
\end{lemma}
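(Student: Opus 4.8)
The plan is to evaluate $\C{T}$ block by block at $\g{\theta}^* = (\m{X}^*, \m{U}^*, \g{\Lambda}^*)$, where $\m{X}_i^* = \m{x}^*(\tau_i)$, $\m{U}_i^* = \m{u}^*(\tau_i)$ and $\g{\Lambda}_i^* = \g{\lambda}^*(\tau_i)$, and to show that two of the six blocks vanish exactly while the remaining four are interpolation or quadrature errors that (A1) controls at the rate $N^{2-\eta}$. The two exact zeros come for free: since $\m{x}^*(-1) = \m{x}_0$ we have $\C{T}_2(\g{\theta}^*) = \m{X}_0^* - \m{x}_0 = \m{0}$, and since the minimum principle (\ref{controlmin}) gives $\nabla_u H(\m{x}^*(t), \m{u}^*(t), \g{\lambda}^*(t)) = \m{0}$ for every $t$, setting $t = \tau_i$ yields $\C{T}_{6i}(\g{\theta}^*) = \m{0}$, $1 \le i \le N$.

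Next I would identify the state and costate blocks as derivative-interpolation errors. Let $\tilde{\m{x}} \in \C{P}_N^n$ interpolate $\m{x}^*$ at $\tau_0, \ldots, \tau_N$ and let $\hat{\g{\lambda}} \in \C{P}_{N-1}^n$ interpolate $\g{\lambda}^*$ at $\tau_1, \ldots, \tau_N$. Because $\m{D}$ and $\m{D}^\ddagger$ are differentiation matrices in the senses of (\ref{Ddef}) and (\ref{h282}), and because $\dot{\m{x}}^*(\tau_i) = \m{f}(\m{x}^*(\tau_i), \m{u}^*(\tau_i))$ while $\dot{\g{\lambda}}^*(\tau_i) = -\nabla_x H$ by (\ref{costate}), I get
\[
\C{T}_{1i}(\g{\theta}^*) = \dot{\tilde{\m{x}}}(\tau_i) - \dot{\m{x}}^*(\tau_i), \qquad
\C{T}_{4i}(\g{\theta}^*) = \dot{\hat{\g{\lambda}}}(\tau_i) - \dot{\g{\lambda}}^*(\tau_i), \quad i < N.
\]
The block $\C{T}_5$ requires showing that the embedded terminal-condition terms cancel: by the last equation in (\ref{h282}), $\sum_{j=1}^N D_{Nj}^\ddagger \g{\lambda}^*(\tau_j) = \dot{\hat{\g{\lambda}}}(\tau_N) - \hat{\g{\lambda}}(1)/\omega_N$, and since $\tau_N = 1$ is an interpolation node we have $\hat{\g{\lambda}}(1) = \g{\lambda}^*(1) = \nabla C(\m{x}^*(1))$ by (\ref{costate}); the two copies of $\g{\lambda}^*(1)/\omega_N$ then cancel, leaving $\C{T}_5(\g{\theta}^*) = \dot{\hat{\g{\lambda}}}(\tau_N) - \dot{\g{\lambda}}^*(\tau_N)$, the same derivative-interpolation error evaluated at the endpoint. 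The block $\C{T}_3$ I would instead recast as a quadrature error: using $\g{\Lambda}_0^* = \g{\lambda}^*(-1)$, $\nabla C(\m{X}_N^*) = \g{\lambda}^*(1)$, $\nabla_x H = -\dot{\g{\lambda}}^*$, and the fundamental theorem of calculus, $\C{T}_3(\g{\theta}^*)$ equals, up to sign, the difference between $\int_{-1}^{1} \dot{\g{\lambda}}^*(t)\,dt$ and its $N$-point Radau quadrature sum $\sum_{i=1}^N \omega_i \dot{\g{\lambda}}^*(\tau_i)$, i.e., the Radau quadrature error for the integrand $\dot{\g{\lambda}}^* \in \C{C}^{\eta}$.

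What remains, and where the real work lies, is to bound each of these residuals by $cN^{2-\eta}$ uniformly in $i$. The quadrature bound for $\C{T}_3$ follows from a standard Radau error estimate combined with the smoothness in (A1). The derivative-interpolation bounds for $\C{T}_1$, $\C{T}_4$, and $\C{T}_5$ are the main obstacle: differentiating an interpolant amplifies the error, so one cannot simply pair a sup-norm interpolation bound with Markov's inequality without losing too many powers of $N$. The plan is to control them through polynomial approximation estimates tailored to the Radau nodes — expressing the error via a best polynomial approximation of the matching degree (which $\tilde{\m{x}}$ and $\hat{\g{\lambda}}$ reproduce) and estimating the derivative of the residual at the nodes using bounds on the derivatives of the Lagrange basis in (\ref{Ddef}), much as in the companion Gauss analysis. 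The hypothesis $\m{x}^*, \g{\lambda}^* \in \C{C}^{\eta+1}$ in (A1) is exactly what converts these estimates into the stated rate, the exponent $2-\eta$ reflecting the loss of one power from differentiation and a further power from the node-dependent amplification in the interpolation bound. Taking the maximum of the six blocks then yields (\ref{delta}).
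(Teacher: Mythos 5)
Your decomposition is the same as the paper's for five of the six blocks: $\C{T}_2$ and $\C{T}_6$ vanish identically, $\C{T}_1$, $\C{T}_4$, and $\C{T}_5$ reduce to derivative-of-interpolant errors at the nodes (including the same cancellation of the two $\nabla C(\m{x}^*(1))/\omega_N$ terms in $\C{T}_5$), and these are then handed off to interpolation estimates for the Radau nodes --- which is exactly what the paper does, except that it cites the specific bounds (Proposition~2.1, Lemma~2.2, and Theorem~5.1 of the companion interpolation paper, giving the $N^2$ amplification and the $O(\log N)$ versus $O(\sqrt{N})$ Lebesgue constants) rather than re-deriving them, so your sketch of that step is at the same level of rigor as the text. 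Where you genuinely diverge is $\C{T}_3$: the paper subtracts the identity $\m{0} = \g{\lambda}^I(-1) - \g{\lambda}^I(1) + \sum_j \omega_j \dot{\g{\lambda}}^I(\tau_j)$ (exact because $\dot{\g{\lambda}}^I$ has degree $N-2$ and Radau quadrature is exact through degree $2N-2$), reducing $\C{T}_3$ to the already-bounded weighted sum of $\dot{\g{\lambda}}^I - \dot{\g{\lambda}}^*$ at the nodes plus the interpolation error at $t=-1$, for a net bound of $cN^{2-\eta}$; you instead recognize $\C{T}_3$ directly as the Radau quadrature error for $\dot{\g{\lambda}}^* \in \C{C}^{\eta}$. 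Your route is cleaner and in fact gives the sharper bound $cN^{-\eta}$, provided you make ``standard Radau error estimate'' precise in the only way compatible with (A1): since the weights are positive and sum to $2$ and the rule is exact on $\C{P}_{2N-2}$, the error is at most $4E_{2N-2}(\dot{\g{\lambda}}^*)$, and Jackson's theorem converts $\eta$ continuous derivatives into $E_{2N-2}(\dot{\g{\lambda}}^*) \le cN^{-\eta}$; the classical derivative-based Radau remainder formula would require far more smoothness than (A1) supplies and cannot be what is meant. With that reading, your argument is correct and trades the paper's reuse of the $\C{T}_4$ bounds for a self-contained quadrature estimate.
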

\begin{proof}
By the definition of $\C{T}$, we have
\[
\C{T}_2 (\m{X}^*,  \m{U}^*, \g{\Lambda}^*) =
\m{X}^*_0 - \m{x}_0 = \m{x}^*(\tau_0) - \m{x}_0 = \m{x}^*(-1) - \m{x}_0 = \m{0}
\]
since $\m{x}^*$  satisfies the initial condition in (\ref{P}).
Likewise,
$\C{T}_6(\m{X}^*,  \m{U}^*, \g{\Lambda}^*) = \m{0}$ since
(\ref{controlmin}) holds for all $t \in [-1, +1]$, which implies that
(\ref{controlmin}) holds at the collocation points.

Now consider $\C{T}_1$.
Since $\m{D}$ is a differentiation matrix associated with the
collocation points, we have
\begin{equation}\label{h115}
\sum_{j=0}^{N} D_{ij} \m{X}_j^* = \dot{\m{x}}^I(\tau_i), \quad 1 \le i \le N,
\end{equation}
where $\m{x}^I \in \C{P}_N^n$ is the (interpolating) polynomial that passes
through $\m{x}^*(\tau_j)$ for $0 \le j \le N$.
Since $\m{x}^*$ satisfies the dynamics of (\ref{P}),
\begin{equation}\label{h116}
\m{f}(\m{X}_i^*, \m{U}_i^*) =
\m{f}(\m{x}^*(\tau_i), \m{u}^*(\tau_i)) = \dot{\m{x}}^* (\tau_i).
\end{equation}
Combine (\ref{h115}) and (\ref{h116}) to obtain
\begin{equation}\label{T1i}
\C{T}_{1i}(\m{X}^*,  \m{U}^*, \g{\Lambda}^*) =
\dot{\m{x}}^I(\tau_i) - \dot{\m{x}}^*(\tau_i) .
\end{equation}
Proposition~2.1 and Lemma~2.2 in
\cite{HagerHouRao15a} yield
\[
\|\dot{\m{x}}^I - \dot{\m{x}}^*\|_\infty  \le
\left( \frac{6e}{N-1} \right)^\eta
\left[ (1+2N^2) + 6e N (1+c_1 \log N) \right]
\left( \frac{ 12 \|x^{(\eta+1)}\|}{\eta+1} \right)
\]
for all $N > \eta+1$,
where $\m{x}^{(\eta+1)}$ is the $(\eta+1)$-st derivative of $\m{x}$
and $c_1 \log N$ is a bound for the Lebesgue constant of the
point set $\tau_j$, $0 \le j \le N$,
given in Theorem~2.1 of \cite{Vertesi81}.
Hence, there exists a constant $c_2$, independent of $N$ but dependent on
$\eta$, such that
\begin{equation} \label{h2}
|\C{T}_{1i}(\m{X}^*,  \m{U}^*, \g{\Lambda}^*)| =
|\dot{\m{x}}^I(\tau_i) - \dot{\m{x}}^*(\tau_i)| \le
\|\dot{\m{x}}^I - \dot{\m{x}}^*\|_\infty  \le c_2 N^{2-\eta},
\end{equation}
which complies with the bound (\ref{delta}).

Next, let us consider $\C{T}_4$.
By (\ref{h282}) if $\g{\lambda}^I \in \C{P}_{N-1}^n$ is the
(interpolating) polynomial that passes
through $\g{\lambda}^*(\tau_j)$ for $1 \le j \le N$, we have
\begin{equation}
\sum_{j=1}^{N} D_{ij}^\ddagger \g{\Lambda}_j^* =
\dot{\g{\lambda}}^I(\tau_i), \quad 1 \le i < N, \quad
\sum_{j=1}^{N} D_{Nj}^\ddagger \g{\Lambda}_j^* =
\dot{\g{\lambda}}^I(1) - \g{\lambda}^I(1)/\omega_N. \label{h83}
\end{equation}
Since $\g{\lambda}^*$ satisfies (\ref{costate}), it follows that
\begin{equation} \label{h84}
\nabla_x H({\bf X}_i^*,{\bf U}_i^*, {\g \Lambda}_i^*) =
\nabla_x H({\bf x}^*(\tau_i),{\bf u}^*(\tau_i), {\g \lambda}^*(\tau_i)) =
-\dot{\g{\lambda}}^*(\tau_i) .
\end{equation}
Hence, for $i < N$, we have
\begin{equation}\label{t4}
\C{T}_{4i}(\m{X}^*,  \m{U}^*, \g{\Lambda}^*) =
\dot{\g{\lambda}}^I(\tau_i) - \dot{\g{\lambda}}^*(\tau_i) .
\end{equation}
By a similar analysis as that used for $\C{T}_1$ in (\ref{h2}),
we conclude that
\begin{equation}\label{t4b}
|\C{T}_{4i}(\m{X}^*,  \m{U}^*, \g{\Lambda}^*)| \le c N^{2-\eta} \quad
\mbox{for } i < N.
\end{equation}
The difference between the analysis of the state in (\ref{h2}) and the
analysis of the costate in (\ref{t4b}) is that the $O(\log N)$ bound
for the Lebesgue constant of $\tau_0$, $\ldots$, $\tau_N$ must be
replaced by an $O(\sqrt{N})$ bound, derived in Theorem 5.1 of
\cite{HagerHouRao15a}, for the Lebesgue constant of
$\tau_1$, $\ldots$, $\tau_N$.
This difference between the state and the costate
arises since the state interpolant
$\m{x}^I \in \C{P}_N^n$ interpolates $\m{x}^*$ at $\tau_0$, $\ldots$, $\tau_N$
while the costate interpolant
$\g{\lambda}^I \in \C{P}_{N-1}^n$ interpolates $\g{\lambda}^*$
at $\tau_1$, $\ldots$, $\tau_N$.
Since the Lebesgue constant term in the bound for
$\|\C{T}_{4i}(\m{X}^*,  \m{U}^*, \g{\Lambda}^*)\|$ is dominated by
the other terms, the right sides of (\ref{h2}) and (\ref{t4b}) are the same.

Similarly, using (\ref{h84}) and the second equation in (\ref{h83}),
it follows that
\[
\C{T}_5 (\m{X}^*, \m{U}^*, \g{\Lambda}^*) =
\dot{\g{\lambda}}^I(1) - \dot{\g{\lambda}}^*(1) +
(\nabla C (\m{x}^*(1)) - \g{\lambda}^*(1))/\omega_N =
\dot{\g{\lambda}}^I(1) - \dot{\g{\lambda}}^*(1)
\]
since $\g{\lambda}^I(1) = \g{\lambda}^*(1) = \nabla C (\m{x}^*(1))$
by (\ref{costate}).
Hence, just like the bound for $\C{T}_{4i}$ in (\ref{t4b}), we have
\[
|\C{T}_5 (\m{X}^*, \m{U}^*, \g{\Lambda}^*)| \le cN^{2-\eta}.
\]

Now consider $\C{T}_3$.
By (\ref{h84}), the definition $\g{\Lambda}_0^*= \g{\lambda}^*(-1)$, and
the terminal condition $\g{\lambda}^*(1) = \nabla C (\m{x}^*(1))$ from
(\ref{costate}), we have
\begin{equation}\label{Tfive}
\C{T}_3(\m{X}^*, \m{U}^*, \g{\Lambda}^*)
={\g \lambda}^*(-1)-{\g \lambda}^*(1)
+\sum_{i=1}^{N}\omega_i \dot{\g{\lambda}}^* (\tau_i) .
\end{equation}
By the fundamental theorem of calculus and the fact that
$N$-point Radau quadrature is exact for polynomials of degree up to
$2N - 2$, we have
\begin{equation} \label{h3}
\m{0} =
{\g \lambda}^I(-1)-{\g \lambda}^I(1)+\int_{-1}^1\dot{\g \lambda}^I(t)dt =
{\g \lambda}^I(-1)-{\g \lambda}^I(1)+
\sum_{j=1}^N\omega_j\dot{\g \lambda}^I(\tau_j) .
\end{equation}
Subtract (\ref{h3}) from (\ref{Tfive}) to obtain
\begin{equation}\label{h4}
\C{T}_5 (\m{X}^*,  \m{U}^*, \g{\Lambda}^*) =
{\g \lambda}^*(-1)-{\g \lambda}^I(-1)
+\sum_{j=1}^N\omega_j\left(\dot{\g \lambda}^*(\tau_j)
-\dot{\g \lambda}^I(\tau_j)\right)
\end{equation}
since $\g{\lambda}^I(1) = \g{\lambda}^*(1)$.
Since $\omega_i > 0$ and their sum is 2, it follows from
(\ref{t4}) and (\ref{t4b}) that
\begin{equation}\label{h5}
\sum_{j=1}^N\omega_j\left|\dot{\g \lambda}^I(\tau_j)
-\dot{\g \lambda}^*(\tau_j)\right| \le c N^{2-\eta} .
\end{equation}
By Theorem~15.1 in \cite{Trefethen13} and
Lemma~2.2 and Theorem~5.1 in \cite{HagerHouRao15a}, we have
\begin{eqnarray}
| {\g \lambda}^*(-1)-{\g \lambda}^I(-1)| &\le&
(1+ c_1 \sqrt{N}) \left( \frac{12}{\eta+2} \right)
\left( \frac{6e}{N} \right)^{\eta+1}
\|{\g{\lambda}^*}^{(\eta+1)}\|_\infty \nonumber \\
&\le& cN^{-(0.5+\eta)}.  \label{h6}
\end{eqnarray}
We combine (\ref{h4})--(\ref{h6}) to see that
$\C{T}_5 (\m{X}^*,  \m{U}^*, \g{\Lambda}^*)$
also complies with the bound (\ref{delta}).
This completes the proof.
\end{proof}

\section{Invertibility}
\label{inverse}
In this section, we show that the derivative $\nabla \C{T} (\g{\theta}^*)$
is invertible.
This is equivalent to showing that for each $\m{y} \in \C{Y}$,
there is a unique $\g{\theta} \in \C{X}$ such that
$\nabla \C{T} (\g{\theta}^*)[\g{\theta}] = \m{y}$.
In our case, $\g{\theta}^* = (\m{X}^*, \m{U}^*, \g{\Lambda}^*)$
and $\g{\theta} = (\m{X}, \m{U}, \g{\Lambda})$.
To simplify the notation, we let $\nabla \C{T}^*[\m{X}, \m{U}, \g{\Lambda}]$
denote the derivative of $\C{T}$ evaluated at
$(\m{X}^*, \m{U}^*, \g{\Lambda}^*)$ operating on $(\m{X}, \m{U}, \g{\Lambda})$.
This derivative involves the following 6 matrices:
\[
\begin{array}{ll}
{\bf A}_i=\nabla_x{\bf f}({\bf x}^*(\tau_i),{\bf u}^*(\tau_i)),
&{\bf B}_i=\nabla_u{\bf f}({\bf x}^*(\tau_i),{\bf u}^*(\tau_i)),\\
{\bf Q}_i=\nabla_{xx}H\left({\bf x}^*(\tau_i),{\bf u}^*(\tau_i),
{\g \lambda}^*(\tau_i)\right),
&{\bf S}_i=\nabla_{xu}H\left({\bf x}^*(\tau_i),{\bf u}^*(\tau_i),
{\g \lambda}^*(\tau_i)\right),\\
{\bf R}_i=\nabla_{uu}H\left({\bf x}^*(\tau_i),{\bf u}^*(\tau_i),
{\g \lambda}^*(\tau_i)\right), &{\bf T}=\nabla^2C({\bf x}^*(1)).
\end{array}
\]
With this notation,
the 6 components of $\nabla \C{T}^*[\m{X}, \m{U}, \g{\Lambda}]$ are as follows:
\begin{eqnarray*}
\nabla \C{T}_{1i}^*[\m{X}, \m{U}, \g{\Lambda}] &=&
\left( \sum_{j=0}^{N}{D}_{ij}{\bf X}_j \right)
- \m{A}_i \m{X}_i - \m{B}_i \m{U}_i,
\quad 1\leq i\leq N ,\\
\nabla \C{T}_2^*[\m{X}, \m{U}, \g{\Lambda}] &=&
{\bf X}_{0},
\\
\nabla \C{T}_{3}^*[\m{X}, \m{U}, \g{\Lambda}] &=&
\g{\Lambda}_0 - \m{TX}_N - \sum_{i=1}^N \omega_i
(\m{A}_i\tr \g{\Lambda}_i  + \m{Q}_i \m{X}_i + \m{S}_i \m{U}_i),
\\
\nabla \C{T}_{4i}^*[\m{X}, \m{U}, \g{\Lambda}] &=&
\left( \sum_{j=1}^{N}{D}_{ij}^\ddagger{\g \Lambda}_j \right) +
\m{A}_i\tr \g{\Lambda}_i  + \m{Q}_i \m{X}_i + \m{S}_i \m{U}_i,
\quad 1 \leq i < N , \\
\nabla \C{T}_{5}^*[\m{X}, \m{U}, \g{\Lambda}] &=&
\left( \sum_{j=1}^{N}{D}_{Nj}^\ddagger{\g \Lambda}_j \right) +
\m{A}_N\tr \g{\Lambda}_N  + \m{Q}_N \m{X}_N + \m{S}_N \m{U}_N
+ \m{T} \m{X}_N/\omega_N , \\
\nabla \C{T}_{6i}^*[\m{X}, \m{U}, \g{\Lambda}] &=&
\m{S}_i\tr\m{X}_i + \m{R}_i \m{U}_i + \m{B}_i\tr \g{\Lambda}_i,
\quad 1\leq i\leq N .
\end{eqnarray*}

The analysis of invertibility starts with results concerning the invertibility
of the linearized discrete state dynamics.
\smallskip

\begin{lemma}
\label{feasiblestate}
If {\rm (P1)} and {\rm (A3)} hold,
then for each $\m{q} \in \mathbb{R}^{n}$ and $\m{p} \in \mathbb{R}^{nN}$
with $\m{p}_i \in \mathbb{R}^n$,
the linear system
\begin{eqnarray}
\left( \sum_{j=0}^{N}{D}_{ij}{\bf X}_j \right) - \m{A}_i \m{X}_i  &=& \m{p}_{i}
\quad 1\leq i\leq N , \label{h99} \\
{\bf X}_0&=&{\m q}, \label{h100}
\end{eqnarray}
has a unique solution $\m{X}_j \in \mathbb{R}^{n}$,
$0 \le j \le N$.
This solution has the bound
\begin{equation}\label{xjbound}
\|\m{X}_j\|_\infty \le 4\|\m{p}\|_\infty + 2\| \m{q}\|_\infty, \quad
0 \le j \le N.
\end{equation}
\end{lemma}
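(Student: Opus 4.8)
The plan is to rewrite the system (\ref{h99})--(\ref{h100}) as a single block-linear system in the unknowns $\m{X}_1, \ldots, \m{X}_N$ and then invert it by a Neumann series argument built on property (P1) and assumption (A3). Writing $\m{A} = \mbox{blockdiag}(\m{A}_1, \ldots, \m{A}_N)$ and letting $\m{M} = \m{D}_{1:N} \otimes \m{I}_n$ denote the block differentiation operator acting on the collocated nodes, the coefficient operator in (\ref{h99}) is $\m{M} - \m{A}$. The first ingredient I would exploit is that the rows of the full differentiation matrix $\m{D}$ sum to zero: applying $\m{D}$ to the constant vector $\m{1}$ gives $\dot{p} \equiv \m{0}$ for $p \equiv 1$, so $\sum_{j=0}^N D_{ij} = 0$ for every $i$.

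The key device is then the change of variables $\m{Y}_j := \m{X}_j - \m{q}$. Because $\sum_{j=0}^N D_{ij} = 0$, the noncollocated term $D_{i0}\m{X}_0$ is absorbed cleanly: one finds $\sum_{j=0}^N D_{ij}\m{X}_j = \sum_{j=0}^N D_{ij}\m{Y}_j = \sum_{j=1}^N D_{ij}\m{Y}_j$ (using $\m{Y}_0 = \m{X}_0 - \m{q} = \m{0}$), so (\ref{h99}) becomes $(\m{M} - \m{A})\m{Y} = \tilde{\m{r}}$ with right-hand side $\tilde{\m{r}}_i = \m{p}_i + \m{A}_i\m{q}$. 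This reduction is the heart of the argument, since it converts the initial datum $\m{q}$ into a bounded perturbation of the forcing term rather than leaving it multiplied by the potentially large entries $D_{i0}$.

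To invert $\m{M} - \m{A}$, I would factor $\m{M} - \m{A} = \m{M}(\m{I} - \m{M}^{-1}\m{A})$, which is legitimate since $\m{D}_{1:N}$ is invertible by (P1). I would then estimate $\|\m{M}^{-1}\m{A}\|_\infty$ blockwise: the $(i,k)$ block of $\m{M}^{-1}\m{A}$ is $E_{ik}\m{A}_k$, where $E_{ik}$ are the entries of $\m{D}_{1:N}^{-1}$, so each scalar row sum is at most $\left( \sum_k |E_{ik}| \right)\max_k\|\m{A}_k\|_\infty \le \|\m{D}_{1:N}^{-1}\|_\infty \cdot \tfrac14 = 2 \cdot \tfrac14 = \tfrac12$ by (P1) and (A3). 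Hence $\|\m{M}^{-1}\m{A}\|_\infty \le \tfrac12 < 1$, so $\m{I} - \m{M}^{-1}\m{A}$ is invertible with sup-norm of its inverse bounded by $\sum_{k \ge 0} 2^{-k} = 2$; this gives existence and uniqueness together with the operator bound $\|(\m{M}-\m{A})^{-1}\|_\infty \le \|(\m{I}-\m{M}^{-1}\m{A})^{-1}\|_\infty \, \|\m{M}^{-1}\|_\infty \le 2 \cdot 2 = 4$, where $\|\m{M}^{-1}\|_\infty = \|\m{D}_{1:N}^{-1}\|_\infty = 2$.

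Finally I would assemble the stated estimate. Since $\|\tilde{\m{r}}\|_\infty \le \|\m{p}\|_\infty + \tfrac14\|\m{q}\|_\infty$, the operator bound yields $\|\m{Y}\|_\infty \le 4\|\tilde{\m{r}}\|_\infty \le 4\|\m{p}\|_\infty + \|\m{q}\|_\infty$, and undoing the substitution gives $\|\m{X}_j\|_\infty \le \|\m{Y}\|_\infty + \|\m{q}\|_\infty \le 4\|\m{p}\|_\infty + 2\|\m{q}\|_\infty$ for $1 \le j \le N$, while $\m{X}_0 = \m{q}$ satisfies the same bound trivially. The only genuine obstacle is the treatment of $\m{q}$: a direct approach that simply moves $D_{i0}\m{q}$ to the right-hand side fails to produce an $N$-independent constant, because the entries $D_{i0}$ grow with $N$. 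The row-sum-to-zero identity together with the shift $\m{Y}_j = \m{X}_j - \m{q}$ is precisely what yields the sharp constant $2$ in front of $\|\m{q}\|_\infty$.
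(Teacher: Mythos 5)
Your proof is correct and follows essentially the same route as the paper: the same block reduction to $(\m{D}_{1:N}\otimes\m{I}_n) - \m{A}$, the same Neumann-series inversion using $\|\m{D}_{1:N}^{-1}\|_\infty = 2$ from (P1) and $\|\m{A}\|_\infty \le 1/4$ from (A3), and the same reliance on $\m{D}\m{1}=\m{0}$ to control the $\m{q}$-dependence. The only difference is cosmetic: you absorb $\m{q}$ via the shift $\m{Y}_j = \m{X}_j - \m{q}$ before inverting, whereas the paper moves $(\m{D}_0\otimes\m{I}_n)\m{q}$ to the right-hand side and then uses the exact identity $\m{D}_{1:N}^{-1}\m{D}_0 = -\m{1}$ after multiplying by $\bar{\m{D}}^{-1}$; both avoid ever bounding $\m{D}_0$ itself and yield the identical constant $4\|\m{p}\|_\infty + 2\|\m{q}\|_\infty$.
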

\smallskip

\begin{proof}
If $\m{X}$ is a solution of (\ref{h99})--(\ref{h100}),
then $\m{X}_0 = \m{q}$ and
$\m{X}_0$ trivially satisfies (\ref{xjbound}).
Next, focus on the remaining components of $\m{X}$.
Let $\bar{\m{X}}$ be the vector obtained by vertically stacking
$\m{X}_1$ through $\m{X}_N$,
let ${\m{A}}$ be the block diagonal matrix
with $i$-th diagonal block $\m{A}_i$, $1 \le i \le N$,
and define
$\bar{\m{D}} = {\bf D}_{1:N}\otimes {\bf I}_n$
and $\bar{\m{D}}_0 = \m{D}_0 \otimes \m{I}_n$
where $\otimes$ is the Kronecker product.
With this notation, the linear system (\ref{h99})--(\ref{h100}) reduces to
\begin{equation}\label{h300}
(\bar{\m{D}} - {\m{A}}) \bar{\m{X}} = \m{p} - \bar{\m{D}}_0 \m{q}.
\end{equation}
By (P1), ${\bf D}_{1:N}$ is invertible which implies that
$\bar{\m{D}}$ is invertible and
$\bar{\m{D}}^{-1} = {\bf D}_{1:N}^{-1}\otimes {\bf I}_n$.
Since the polynomial that is identically equal to 1 has derivative 0 and
since $\m{D}$ is a differentiation matrix, we have $\m{D1} = \m{0}$,
which implies that $\m{D}_{1:N}^{-1} \m{D}_0 = -\m{1}$.
It follows that
\[
\bar{\m{D}}^{-1}
\bar{\m{D}}_{0} =
[{\bf D}_{1:N}^{-1}\otimes {\bf I}_n]
[\m{D}_{0} \otimes \m{I}_n] = -\m{1}\otimes \m{I}_n .
\]

Multiply (\ref{h300}) by $\bar{\m{D}}^{-1}$ to obtain
\begin{equation}\label{h301}
(\m{I} - \bar{\m{D}}^{-1} \m{A}) \bar{\m{X}} =
\bar{\m{D}}^{-1} \m{p} + (\m{1}\otimes \m{I}_n) \m{q}.
\end{equation}
By (P1) $\|\m{D}_{1:N}^{-1}\|_\infty \le 2$, which implies that
$\|\bar{{\bf D}}^{-1}\|_\infty \le 2$.
By (A3) $\|{\m{A}}\|_\infty \le 1/4$.
By \cite[p. 351]{HJ12},
${\bf I}-\bar{\bf D}^{-1}{\bf A}$ is invertible and
$\left\|({\bf I}-\bar{\bf D}^{-1}{\bf A})^{-1}\right\|_\infty \leq 2$.
Multiply (\ref{h301}) by $({\bf I}-\bar{\bf D}^{-1}{\bf A})^{-1}$ and
take the norm of each side to obtain
$\|\bar{\m{X}}\|_\infty \le 4 \|\m{p}\|_\infty + 2\|\m{q}\|_\infty$.
This complete the proof of (\ref{xjbound}).
\end{proof}
\smallskip


Next, we establish the invertibility of $\nabla \C{T}^*$.
\smallskip

\begin{proposition}
If {\rm (P1)}, {\rm (A2)}, and {\rm (A3)} hold, then
$\nabla \C{T}^*$ is invertible.
\end{proposition}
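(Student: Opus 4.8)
The plan is to exploit that $\nabla\C{T}^*$ is a \emph{square} linear map: a direct count gives $\dim\C{X} = \dim\C{Y} = 2n(N+1) + mN$, so invertibility is equivalent to injectivity. I would therefore assume $\nabla\C{T}^*[\m{X},\m{U},\g{\Lambda}] = \m{0}$ and show $(\m{X},\m{U},\g{\Lambda}) = \m{0}$. The homogeneous equations split into the linearized dynamics ($\C{T}_1^*,\C{T}_2^*$), the linearized costate equations ($\C{T}_4^*,\C{T}_5^*$), and the control stationarity ($\C{T}_6^*$); these are exactly the optimality conditions of the linear--quadratic accessory problem whose cost is the quadratic form appearing in (A2), and the heart of the argument is to recover that quadratic form and invoke coercivity.

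The key algebraic step is a discrete summation by parts built from the defining relation $D_{ij}^\ddagger = -(\omega_j/\omega_i)D_{ji}$. Using $\m{X}_0 = \m{0}$ (from $\C{T}_2^* = \m{0}$), this relation yields
\[
\sum_{i=1}^N \omega_i\,\m{X}_i\tr\Big(\sum_{j=1}^N D_{ij}^\ddagger\g{\Lambda}_j\Big)
= -\sum_{j=1}^N \omega_j\,\g{\Lambda}_j\tr\Big(\sum_{i=0}^N D_{ji}\m{X}_i\Big).
\]
I would substitute the linearized dynamics $\sum_{i=0}^N D_{ji}\m{X}_i = \m{A}_j\m{X}_j + \m{B}_j\m{U}_j$ into the right-hand side and the costate equations $\C{T}_4^* = \C{T}_5^* = \m{0}$ into the left-hand side. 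The terms containing $\m{A}_i$ cancel, the terminal correction $\m{T}\m{X}_N/\omega_N$ in $\C{T}_5^*$ contributes exactly $\m{X}_N\tr\m{T}\m{X}_N$, and after eliminating $\m{B}_i\tr\g{\Lambda}_i$ through $\C{T}_6^* = \m{0}$ one is left with
\[
\m{X}_N\tr\m{T}\m{X}_N + \sum_{i=1}^N \omega_i
\begin{pmatrix}\m{X}_i\\ \m{U}_i\end{pmatrix}\tr
\begin{pmatrix}\m{Q}_i & \m{S}_i\\ \m{S}_i\tr & \m{R}_i\end{pmatrix}
\begin{pmatrix}\m{X}_i\\ \m{U}_i\end{pmatrix} = 0 .
\]
Since $\omega_i > 0$ and (A2) makes each quadratic form on the left at least $\alpha(|\m{X}_i|^2 + |\m{U}_i|^2)$ (with $\m{T}$ positive definite), every summand must vanish, forcing $\m{X}_i = \m{U}_i = \m{0}$ for $1 \le i \le N$; together with $\m{X}_0 = \m{0}$ this gives $\m{X} = \m{0}$ and $\m{U} = \m{0}$.

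It remains to show $\g{\Lambda} = \m{0}$. With $\m{X} = \m{U} = \m{0}$, the costate equations collapse to $(\m{D}^\ddagger\otimes\m{I}_n + \m{A}\tr)\,\g{\Lambda}_{1:N} = \m{0}$, where $\m{A}\tr$ is block diagonal with blocks $\m{A}_i\tr$. Here I would use the weight-similarity that follows from the same defining relation, namely $\m{D}^\ddagger = -\m{W}^{-1}\m{D}_{1:N}\tr\m{W}$: extending it by $\otimes\,\m{I}_n$ and using that $\m{A}\tr$ commutes with $\m{W}\otimes\m{I}_n$, the costate operator is similar to $-\,(\,\m{D}_{1:N}\otimes\m{I}_n - \m{A}\,)\tr$. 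The matrix $\m{D}_{1:N}\otimes\m{I}_n - \m{A}$ is precisely the operator shown invertible in Lemma~\ref{feasiblestate} under (P1) and (A3), so its transpose is invertible and hence $\g{\Lambda}_{1:N} = \m{0}$. Finally $\C{T}_3^* = \m{0}$ yields $\g{\Lambda}_0 = \m{0}$, completing the proof of injectivity, and therefore of invertibility.

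I expect the main obstacle to be the bookkeeping in the summation by parts — in particular, verifying that the $1/\omega_N$ terminal correction produces exactly the terminal cost term $\m{X}_N\tr\m{T}\m{X}_N$ and that all first-order ($\m{A}_i$) cross terms cancel — together with packaging the homogeneous costate system through the weight-similarity so that Lemma~\ref{feasiblestate} can be reused, rather than having to invoke a separate bound on $(\m{D}^\ddagger)^{-1}$.
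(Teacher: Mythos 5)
Your proof is correct, but it goes in the opposite direction from the paper's. You prove \emph{injectivity} of the square map $\nabla\C{T}^*$: a discrete integration by parts based on $D_{ij}^\ddagger=-(\omega_j/\omega_i)D_{ji}$ recovers the accessory quadratic form $\C{Q}(\m{X},\m{U})=\m{X}_N\tr\m{T}\m{X}_N+\sum_i\omega_i(\m{X}_i\tr\m{Q}_i\m{X}_i+2\m{X}_i\tr\m{S}_i\m{U}_i+\m{U}_i\tr\m{R}_i\m{U}_i)=0$, and (A2) with $\omega_i>0$ forces $(\m{X},\m{U})=\m{0}$; I checked the bookkeeping you were worried about (the $\m{A}_i$ cross terms cancel since $\m{X}_i\tr\m{A}_i\tr\g{\Lambda}_i=\g{\Lambda}_i\tr\m{A}_i\m{X}_i$, and the $i=N$ weight $\omega_N$ cancels the $1/\omega_N$ in the terminal correction to give exactly $\m{X}_N\tr\m{T}\m{X}_N$), and it works. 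The paper instead proves \emph{surjectivity}: it constructs a strongly convex quadratic program whose KKT system is $\nabla\C{T}^*[\g\theta]=\m{y}$, uses Lemma~\ref{feasiblestate} for feasibility and (A2) for existence and uniqueness of a minimizer, and concludes that the square system is solvable for every $\m{y}$. The two arguments are genuinely dual, and both ultimately rest on the same ingredients ((A2) coercivity plus Lemma~\ref{feasiblestate}); what the paper's QP formulation buys is reuse — the same quadratic program is the vehicle for the quantitative $\omega$-norm and $\infty$-norm stability bounds of Sections~\ref{omegabounds} and~\ref{inftybounds}, so the summation-by-parts computation you do by hand is absorbed into the standard first-order condition $\pi(w,w)=-(\pi(w,\phi)+\ell(w))$. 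What your route buys is a clean treatment of the homogeneous costate system: the exact similarity $\m{D}^\ddagger=-\m{W}^{-1}\m{D}_{1:N}\tr\m{W}$ (which is just the definition in matrix form, and extends through $\otimes\,\m{I}_n$ because the block-diagonal $\m{A}\tr$ commutes with $\m{W}\otimes\m{I}_n$) shows the costate operator is similar to $-(\bar{\m{D}}-\m{A})\tr$, so its invertibility follows from Lemma~\ref{feasiblestate} using only (P1) and (A3) — exactly the stated hypotheses — without invoking the unproved property (P3) or the separate polynomial argument the paper uses to invert $\m{D}^\ddagger$. Note, though, that this similarity does not transfer the $\infty$-norm bounds of (P1)--(P2) to (P3)--(P4) (the conjugation by $\m{W}$ and the transpose destroy the row-sum estimate), so it does not resolve the open issue the paper flags; it only settles invertibility.
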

\smallskip

\begin{proof}
Our approach is to formulate a strongly convex quadratic programming
problem which has a unique solution $(\m{X}, \m{U})$ by (A2),
and which has the property that the associated
first-order optimality condition is
$\nabla \C{T}^*[\m{X}, \m{U}, \g{\Lambda}] = \m{y}$.
Since $\nabla \C{T}^*$ is square and
$\nabla \C{T}^*[\m{X}, \m{U}, \g{\Lambda}] = \m{y}$ has a solution
for each choice of $\m{y}$, we conclude the $\nabla \C{T}^*$ is invertible.

The quadratic program is
\begin{equation}\label{QP}
\left.
\begin{array}{cl}
\mbox {minimize} &\frac{1}{2} \mathcal{Q}({\bf X},{\bf U})
+ \C{L}(\m{X}, \m{U}) \\[.08in]
\mbox {subject to} &\sum_{j=0}^{N}{D}_{ij}{\bf X}_j
={\bf A}_i{\bf X}_i+ {\bf B}_i{\bf U}_i+{\bf y}_{1i},
\quad 1\leq i \leq N, \\
&{\bf X}_{0}={\bf y}_2 ,
\end{array}
\right\}
\end{equation}
where the quadratic and linear terms in the objective are
\begin{eqnarray}
\mathcal{Q}({\bf X},{\bf U})&=&
{\bf X}_{N}\tr{\bf T}{\bf X}_{N}+\sum_{i=1}^N\omega_i
\left({\bf X}_i\tr{\bf Q}_i{\bf X}_i
+2{\bf X}_i\tr{\bf S}_i{\bf U}_i
+{\bf U}_i\tr{\bf R}_i{\bf U}_i\right), \label{Q}\\
\C{L}(\m{X}, \m{U}) &=&
\m{X}_0\tr \left( \m{y}_3 + \sum_{i=1}^N \omega_i \m{y}_{4i} \right) -
\sum_{i=1}^N \omega_i \left( \m{y}_{4i}\tr \m{X}_i +
\m{y}_{6i}\tr \m{U}_i \right) . \label{L}
\end{eqnarray}
In (\ref{L}) we simplified the formula for $\C{L}$ by introducing
$\m{y}_{4N} = \m{y}_5$.
By Lemma~\ref{feasiblestate}, the quadratic programming problem (\ref{QP})
is feasible.
Since the Radau quadrature weights $\omega_i$ are strictly positive,
it follows from (A2) that $\C{Q}$ is strongly convex.
Hence, there exists a unique optimal solution to (\ref{QP})
for any choice of $\m{y}$.
Since the constraints are linear, the first-order optimality conditions hold.
The linear term was chosen so that the first-order optimality
conditions for (\ref{QP}) reduce to 
$\nabla \C{T}^*[\m{X}, \m{U}, \g{\Lambda}] = \m{y}$.
Hence, the existence of a solution to (\ref{QP}) for each $\m{y}$
would imply that $\nabla \C{T}^*[\m{X}, \m{U}, \g{\Lambda}] = \m{y}$
has a solution for each choice of $\m{y}$.
To complete the proof, we need to show that the first-order
optimality conditions for (\ref{QP}) are equivalent to
$\nabla \C{T}^*[\m{X}, \m{U}, \g{\Lambda}] = \m{y}$.

The Lagrangian of (\ref{QP}) is given by
\[
\frac{1}{2} \C{Q}(\m{X}, \m{U}) + \C{L}(\m{X}, \m{U})
+\sum_{i=1}^N \g{\lambda}_i\tr \left(
{\bf A}_i{\bf X}_i+ {\bf B}_i{\bf U}_i+{\bf y}_{1i}
- \sum_{j=0}^{N}{D}_{ij}{\bf X}_j \right)
+ \g{\Lambda}_0\tr (\m{y}_2 - \m{X}_0) .
\]
The first-order optimality conditions are obtained by setting to zero
the derivative of the Lagrangian with respect to each of the
components of $\m{X}$ and $\m{U}$.
We give the derivation of the 3rd, 4th, and 5th components of
$\nabla \C{T}^*[\m{X}, \m{U}, \g{\Lambda}] = \m{y}$, while the 6th
component follows in a similar fashion,
and the 1st and 2nd components are simply the constraints in (\ref{QP}).

Setting to zero the partial derivative of the Lagrangian with
respect to $\m{X}_i$, $1 \le i < N$, yields the equation
\[
\m{A}_i\tr \g{\lambda}_i +\omega_i \m{Q}_i \m{X}_i
+ \omega_i \m{S}_i \m{U}_i - \sum_{j=1}^N D_{ji} \g{\lambda}_j =
\omega_i \m{y}_{4i} .
\]
Substituting $D_{ji} = -(\omega_i/\omega_j) D_{ij}^\ddag$ and
$\g{\lambda}_j = \omega_j \g{\Lambda}_j$, we obtain
\begin{equation}\label{4th}
\left( \sum_{j=1}^N D_{ij}^\ddag \g{\Lambda}_j \right) +
\m{A}_i\tr \g{\Lambda}_i +\m{Q}_i \m{X}_i + \m{S}_i \m{U}_i
= \m{y}_{4i} ,
\end{equation}
which gives the 4th component of
$\nabla \C{T}^*[\m{X}, \m{U}, \g{\Lambda}] = \m{y}$.
In a similar fashion, setting to
zero the partial derivative of the Lagrangian with
respect to $\m{X}_N$ yields the 5th component of
$\nabla \C{T}^*[\m{X}, \m{U}, \g{\Lambda}] = \m{y}$:
\begin{equation}\label{5th}
\left( \sum_{j=1}^N D_{Nj}^\ddag \g{\Lambda}_j \right) +
\m{A}_N\tr \g{\Lambda}_N +\m{Q}_N \m{X}_N + \m{S}_N \m{U}_N +
\m{T}\m{X}_N/\omega_N = \m{y}_{4N} .
\end{equation}

Setting to zero the partial derivative of the Lagrangian with respect
to $\m{X}_0$ gives the equation
\begin{equation}\label{3rd}
\g{\Lambda}_0 + \sum_{i=1}^N
\left( \g{\lambda}_i D_{i0} - \omega_i \m{y}_{4i} \right) = \m{y}_3 .
\end{equation}
Since $\m{D}$ is a differentiation matrix and $\m{D1} = \m{0}$,
it follows that
\[
D_{i0} = -\sum_{j=1}^N D_{ij}.
\]
Consequently, we have
\[
\sum_{i=1}^N D_{i0} \g{\lambda}_i =
- \sum_{i=1}^N \sum_{j=1}^N D_{ij} \g{\lambda}_i =
\sum_{i=1}^N \sum_{j=1}^N \omega_j D_{ji}^\ddag \g{\Lambda}_i =
\sum_{i=1}^N \sum_{j=1}^N \omega_i D_{ij}^\ddag \g{\Lambda}_j .
\]
We make this substitution as well as (\ref{4th}) and (\ref{5th}) into
(\ref{3rd}).
The $\m{D}^\ddag$ terms cancel to give
\[
\g{\Lambda}_0 - \m{TX}_N - \sum_{i=1}^N \omega_i
(\m{A}_i\tr \g{\Lambda}_i  + \m{Q}_i \m{X}_i + \m{S}_i \m{U}_i) = \m{y}_3,
\]
which is the 3rd component of
$\nabla \C{T}^*[\m{X}, \m{U}, \g{\Lambda}] = \m{y}$.
This completes the proof.
\end{proof}

\section{$\omega$-norm bounds for the state and control}
\label{omegabounds}
In this section we obtain a bound for the
$(\m{X}, \m{U})$ component of the solution to
$\nabla \C{T}^*[\m{X}, \m{U}, \g{\Lambda}] = \m{y}$ in terms of $\m{y}$.
Since $\m{X}_0$ must satisfy the constraint $\m{X}_0 = \m{y}_2$,
it is trivially bounded in terms of $\|\m{y}\|_\infty$.
Hence, we focus on $(\m{X}_i, \m{U}_i)$, $1 \le i \le N$.
The bound we derive in this section is in terms of the
$\omega$-norms defined by
\begin{equation}\label{onorm}
\|{\bf X}\|_\omega^2= |\m{X}_N|^2 +
\sum_{i=1}^N\omega_i|{\bf X}_i|^2 \quad \mbox{and} \quad
\|{\bf U}\|_\omega^2= \sum_{i=1}^N\omega_i|{\bf U}_i|^2.
\end{equation}
This defines a norm since the Radau quadrature weight $\omega_i > 0$
for each $i$.
Since the
$(\m{X}, \m{U})$ component of the solution to
$\nabla \C{T}^*[\m{X}, \m{U}, \g{\Lambda}] = \m{y}$ is a solution
of the quadratic program (\ref{QP}), we will bound the solution
to the quadratic program.

First, let us think more abstractly.
Let $\pi$ be a symmetric, continuous bilinear functional defined on
a Hilbert space $\C{H}$, let $\ell$ be a continuous linear functional,
let $\phi \in \C{H}$, and consider the quadratic program
\[
\min \left\{ \frac{1}{2}\pi(v+\phi,v+\phi) + \ell (v+\phi):
v\in \C{V} \right\},
\]
where $\C{V}$ is a subspace of $\C{H}$.
If $w$ is a minimizer, then by the first-order optimality conditions,
we have
\[
\pi (w, v) + \pi (\phi, v) + \ell (v) = 0 \quad \mbox{for all } v \in \C{V}.
\]
Inserting $v = w$ yields
\begin{equation}\label{nullw}
\pi (w,w) = -(\pi (w, \phi) + \ell (w)).
\end{equation}

We apply this observation to the quadratic program (\ref{QP}) where we
treat $\m{X}_0 = \m{y}_2$ as fixed, so the minimization is over
$(\m{X}_i, \m{U}_i)$, $1 \le i \le N$.
We identify $\ell$ with the linear functional $\C{L}$ in (\ref{L}) but with
the $\m{X}_0$ term dropped since it is fixed,
and $\pi$ with the bilinear form associated with the quadratic term (\ref{Q}).
The subspace $\C{V}$ is the null space of the linear operator
in (\ref{QP}) and $\phi$ is a particular solution of the linear system.
The complete solution of (\ref{QP}) is the particular solution plus
the minimizer over the null space.

In more detail, let $\g{\chi}$ denote the solution to (\ref{h99})--(\ref{h100})
given by Lemma~\ref{feasiblestate} for $\m{p} = \m{y}_1$ and $\m{q} = \m{y}_2$.
We consider the particular solution $(\m{X}, \m{U})$
of the linear system in (\ref{QP}) given by $(\g{\chi},\m{0})$.
The relation (\ref{nullw}) describing the null space component $(\m{X}, \m{U})$
of the solution is
\begin{equation}\label{nullx}
\C{Q}(\m{X}, \m{U}) = - \left( \g{\chi}_N\tr \m{TX}_N
+ \sum_{i=1}^N \omega_i \left[ (\m{Q}_i \g{\chi}_i - \m{y}_{4i})\tr \m{X}_i -
\m{y}_{6i}\tr \m{U}_i \right] \right) .
\end{equation}
Here the terms containing $\g{\chi}$ are associated with $\pi (w, \phi)$,
while the remaining terms are associated with $\ell$,
or equivalently with $\C{L}$.
By (A2) we have the lower bound
\begin{equation}\label{lower}
\C{Q}(\m{X}, \m{U}) \ge \alpha (\|\m{X}\|_\omega^2 + \|\m{U}\|_\omega^2) .
\end{equation}
All the terms on the right side of (\ref{nullx}) can be bounded with
the Schwarz inequality; for example,
\begin{eqnarray}
\sum_{i=1}^N \omega_i \m{y}_{4i}\tr \m{X}_i &\le&
\left( \sum_{i=1}^N \omega_i |\m{y}_{4i}|^2 \right)^{1/2}
\left( \sum_{i=1}^N \omega_i |\m{X}_{i}|^2 \right)^{1/2} \nonumber \\
&\le& \sqrt{2} \|\m{y}_4\|_\infty
\left( \|\m{X}\|_\omega^2 + \|\m{U}\|_\omega^2  \right)^{1/2} . \label{upper}
\end{eqnarray}
The last inequality exploits the fact that the $\omega_i$ sum to 2 and
$|\m{y}_{4i}| \le \|\m{y}_4\|_\infty$.
To handle the terms involving $\g{\chi}$ in (\ref{nullx}),
we utilize the upper bound
$\|\g{\chi}_j\|_\infty \le 6 \|\m{y}\|_\infty$ based on Lemma~\ref{feasiblestate}
with $\m{p} = \m{y}_1$ and $\m{q} = \m{y}_2$.
Combining upper bounds of the form (\ref{upper}) with the lower bound
(\ref{lower}), we conclude from (\ref{nullx}) that both $\|\m{X}\|_\omega$ and
$\|\m{U}\|_\omega$ are bounded by a constant times $\|\m{y}\|_\infty$.
The complete solution of (\ref{QP}) is the null space component that we
just estimated plus the particular solution $(\g{\chi}, \m{0})$.
Again, since $\|\g{\chi}_j\|_\infty \le 6 \|\m{y}\|_\infty$,
we obtain the following result.
\smallskip

\begin{lemma}\label{l2}
If {\rm (A2)--(A3)} and {\rm (P1)} hold, then there exists a constant $c$,
independent of $N$, such that the solution $(\m{X}, \m{U})$
of $(\ref{QP})$ satisfies $\|\m{X}\|_\omega \le c \|\m{y}\|_\infty$ and
$\|\m{U}\|_\omega \le c \|\m{y}\|_\infty$.
\end{lemma}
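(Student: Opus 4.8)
The plan is to exploit the variational characterization of the minimizer of the quadratic program (\ref{QP}) together with the coercivity furnished by (A2), following the abstract null-space reduction already set up above via (\ref{nullw}). First I would split the solution of (\ref{QP}) as a particular solution plus a component lying in the null space $\C{V}$ of the linear constraint operator. Using Lemma~\ref{feasiblestate} with $\m{p} = \m{y}_1$ and $\m{q} = \m{y}_2$ yields a particular solution $(\g{\chi}, \m{0})$ obeying $\|\g{\chi}_j\|_\infty \le 4\|\m{y}_1\|_\infty + 2\|\m{y}_2\|_\infty \le 6\|\m{y}\|_\infty$; this bound is independent of $N$ precisely because (P1) and (A3) are invoked inside Lemma~\ref{feasiblestate}. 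The null-space component $(\m{X}, \m{U})$ then satisfies the identity (\ref{nullx}), obtained by inserting $v = w$ into the first-order optimality condition.

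Next I would bound the two sides of (\ref{nullx}) against one another. On the left, assumption (A2) gives the lower bound (\ref{lower}), namely $\C{Q}(\m{X}, \m{U}) \ge \alpha(\|\m{X}\|_\omega^2 + \|\m{U}\|_\omega^2)$, where the uniform positivity of the Hessian of $H$ controls the weighted sum and the positivity of $\m{T} = \nabla^2 C$ controls the $|\m{X}_N|^2$ term in the definition (\ref{onorm}) of $\|\cdot\|_\omega$. On the right, each term is estimated by Cauchy--Schwarz exactly as in (\ref{upper}): writing $t := (\|\m{X}\|_\omega^2 + \|\m{U}\|_\omega^2)^{1/2}$, the linear terms in $\m{y}_4$ and $\m{y}_6$ contribute at most $\sqrt{2}\,\|\m{y}\|_\infty\, t$ since the Radau weights sum to $2$, while the terms containing $\g{\chi}$ contribute at most a constant multiple of $(\max_i \|\m{Q}_i\|)\,\|\g{\chi}\|_\infty\, t$ together with $\|\m{T}\|\,|\g{\chi}_N|\,|\m{X}_N|$; invoking $\|\g{\chi}_j\|_\infty \le 6\|\m{y}\|_\infty$ bounds all of these by $c\|\m{y}\|_\infty\, t$ with $c$ independent of $N$.

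Combining the lower and upper bounds yields the scalar inequality $\alpha t^2 \le c\|\m{y}\|_\infty\, t$, whence $t \le (c/\alpha)\|\m{y}\|_\infty$; that is, both $\|\m{X}\|_\omega$ and $\|\m{U}\|_\omega$ for the null-space component are bounded by a constant times $\|\m{y}\|_\infty$. Finally I would reassemble the full solution of (\ref{QP}) as this null-space component plus the particular solution $(\g{\chi}, \m{0})$; since $\|\g{\chi}\|_\omega^2 = |\g{\chi}_N|^2 + \sum_{i=1}^N \omega_i|\g{\chi}_i|^2 \le (1 + 2)\,(6\|\m{y}\|_\infty)^2$ is again $N$-independent, the triangle inequality delivers the claimed bounds.

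I expect the only genuine subtlety to be verifying that every constant produced along the way is truly independent of $N$. This hinges on three $N$-uniform inputs: the coercivity constant $\alpha$ from (A2); the fact that the Radau weights sum to $2$, so the Cauchy--Schwarz factors are $\sqrt{2}$ rather than quantities growing with $N$; and the $N$-independent bound on $\g{\chi}$ from Lemma~\ref{feasiblestate}, which is itself the place where (P1) and (A3) do the essential work. The boundedness of $\|\m{Q}_i\|$, $\|\m{S}_i\|$, and $\|\m{T}\|$ follows from continuity of the second derivatives of $\m{f}$ and $C$ on the relevant compact sets, so these factors are uniform in $N$ as well.
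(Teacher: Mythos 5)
Your proposal is correct and follows essentially the same route as the paper: the decomposition into the particular solution $(\g{\chi},\m{0})$ from Lemma~\ref{feasiblestate} plus a null-space component, the identity (\ref{nullx}), the coercivity lower bound (\ref{lower}) from (A2), the Cauchy--Schwarz estimates as in (\ref{upper}), and the final reassembly are exactly the paper's argument. The only additions are explicit spelled-out details (the role of $\nabla^2 C$ in controlling the $|\m{X}_N|^2$ term of the $\omega$-norm, and the $\omega$-norm bound on $\g{\chi}$ in the triangle inequality) that the paper leaves implicit.
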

\smallskip

\section{$\infty$-norm bounds}
\label{inftybounds}
We now need to convert these $\omega$-norm bounds for $\m{X}$ and
$\m{U}$ into $\infty$-norm bounds and at the same time, obtain an
$\infty$-norm bound for $\g{\Lambda}$.
As in Lemma~\ref{feasiblestate}, the
solution to the dynamics in (\ref{QP})
can be expressed
\begin{equation}\label{solvestate}
\bar{\m{X}} = (\m{I} - \bar{\m{D}}^{-1}{\m{A}})^{-1}
\left[ \bar{\m{D}}^{-1} \m{BU} + \bar{\m{D}}^{-1}\m{y}_1
+ (\m{1}\otimes \m{I}_n) \m{y}_2 \right],
\end{equation}
where $\m{B}$ is the block diagonal matrix with $i$-th diagonal block $\m{B}_i$
and $\m{U}$ is obtained by vertically stacking $\m{U}_1$ through $\m{U}_N$.
By Lemma~\ref{feasiblestate},
\begin{equation}\label{h201}
\|(\m{I} - \bar{\m{D}}^{-1}{\m{A}})^{-1}
\left[ \bar{\m{D}}^{-1}\m{y}_1
+ (\m{1}\otimes \m{I}_n) \m{y}_2 \right]\le
4 \|\m{y}_1\|_\infty + 2 \|\m{y}_2\|_\infty .
\end{equation}
The term $\bar{\m{D}}^{-1} \m{BU}$ can be bounded using (P2) and the
strategy given in Section~6 of \cite{HagerHouRao15b}.
That is, we first observe that
\begin{equation}\label{h103}
\bar{\m{D}}^{-1} \m{BU} = [\m{D}_{1:N}^{-1}\otimes \m{I}_n] \m{BU} =
[(\m{W}^{1/2} \m{D}_{1:N})^{-1} \otimes \m{I}_n] \m{BU}_\omega ,
\end{equation}
where $\m{W}$ is the diagonal matrix with the quadrature weights on the
diagonal and $\m{U}_\omega$ is the vector whose $i$-th element is
$\sqrt{\omega_i} \m{U}_i$;
the $\sqrt{\omega_i}$ factors in (\ref{h103}) cancel each other.
An element of the vector $\bar{\m{D}}^{-1} \m{BU}$ is the
dot product between a row of
$(\m{W}^{1/2} \m{D}_{1:N})^{-1} \otimes \m{I}_n$ and the column vector
$\m{BU}_\omega$.
By (P2) the rows of $(\m{W}^{1/2} \m{D}_{1:N})^{-1} \otimes \m{I}_n$ have
Euclidean length bounded by $\sqrt{2}$.
By the properties of matrix norms induced by vector norms, we have
\[
\|\m{BU}_\omega\|_2 \le \|\m{B}\|_2 \|\m{U}_\omega\|_2 =
\|\m{B}\|_2 \|\m{U}\|_\omega .
\]
It follows that
\[
\|\bar{\m{D}}^{-1} \m{BU}\|_\infty \le \sqrt{2} \|\m{B}\|_2 \|\m{U}\|_\omega \le
c \|\m{y}\|_\infty,
\]
where the generic constant $c$ is independent of $N$ by Lemma~\ref{l2}.
Combining this with (\ref{h201}), we conclude that
for some constant $c$, the $\bar{\m{X}}$ component of the solution to
(\ref{QP}) satisfies $\|\bar{\m{X}}\|_\infty \le c\|\m{y}\|_\infty$.
Since $|\m{X}_0| = |\m{y}_2| \le \|\m{y}\|_\infty$,
the entire $\m{X}$-component of the solution to (\ref{QP}) satisfies
$\|\m{X}\|_\infty \le c\|\m{y}\|_\infty$ for some $c$.

Next, we focus on the 4th and 5th components of
$\nabla \C{T}^*[\m{X}, \m{U}, \g{\Lambda}] = \m{y}$ which can be written
\begin{equation}\label{c1}
\bar{\m{D}}^\ddag \bar{\g{\Lambda}} + {\m{A}}\tr \bar{\g{\Lambda}} +
{\m{Q}} \bar{\m{X}} + {\m{S}} {\m{U}}
+\left( \frac{1}{\omega_N} \right) (\m{e}_N\otimes \m{I}_n) \m{TX}_N = \m{y}_4,
\end{equation}
where $\bar{\m{D}}^\ddag = {\bf D}^\ddag\otimes {\bf I}_n$,
$\bar{\g{\Lambda}}$ is obtained by vertically stacking
$\g{\Lambda}_1$ through $\g{\Lambda}_N$,
${\m{Q}}$ and ${\m{S}}$ are block diagonal matrices with $i$-th diagonal blocks
$\m{Q}_i$ and $\m{S}_i$ respectively, and $\m{e}_N \in \mathbb{R}^N$
is the vector whose components are all zero except for the $N$-th
component which is 1.
Similar to our manipulations of the state dynamics in (\ref{solvestate}),
we use (A3) and (P3) to solve for $\bar{\g{\Lambda}}$:
\begin{equation}\label{h204}
\bar{\g{\Lambda}} =
-(\m{I} + \bar{\m{D}}^{\ddag\; -1}{\m{A}}\tr)^{-1} \bar{\m{D}}^{\ddag\;-1}
\left[ \m{SU} + \m{Q}\bar{\m{X}}
+\left( \frac{1}{\omega_N} \right) (\m{e}_N\otimes \m{I}_n)
\m{TX}_N - \m{y}_4 \right]
\end{equation}

Let $p$ be the polynomial that is identically one, and let $\m{p}$
be the associated vector whose components are all one.
Making this substitution in (\ref{h282}) gives
${\bf D}^\ddag{\bf 1}=$ $-\m{e}_N /\omega_N$,
which implies that
\begin{equation}\label{h999}
\m{D}^{\ddag\; -1} \m{e}_N = -\omega_N \m{1}.
\end{equation}
Consequently, we have
\[
\bar{\m{D}}^{\ddag\;-1} (\m{e}_N\otimes \m{I}_n)/\omega_N =
[{\bf D}^{\ddag \; -1}\otimes {\bf I}_n]
(\m{e}_N\otimes \m{I}_n)/\omega_N =
-\m{1}\otimes \m{I}_n .
\]
With this substitution, (\ref{h204}) becomes
\begin{equation}\label{h101}
\bar{\g{\Lambda}} =
-(\m{I} + \bar{\m{D}}^{\ddag\; -1}{\m{A}}\tr)^{-1}
\left[
\bar{\m{D}}^{\ddag\;-1} (\m{SU} + \m{Q}\bar{\m{X}}
- \m{y}_4 ) -
(\m{1}\otimes \m{I}_n) \m{TX}_N \right] .
\end{equation}
Exactly as in Lemma~\ref{feasiblestate}, we have
$\|\bar{\m{D}}^{\ddag\; -1}\|_\infty \le 2$ by (P2), and
$(\m{I} + \bar{\m{D}}^{\ddag\; -1}{\m{A}}\tr)^{-1}\|_\infty \le 2$ by (A3).
As in the analysis of the state dynamics, all the terms on the right side
of (\ref{h101}) are bounded by $c\|\m{y}\|_\infty$ for a suitable choice of $c$.
Hence, we have $\|\bar{\g{\Lambda}}\|_\infty \le c\|\m{y}\|_\infty$.

Now consider the 6th component of
$\nabla \C{T}^*[\m{X}, \m{U}, \g{\Lambda}] = \m{y}$, which can be written
\[
\m{S}_i\tr\m{X}_i + \m{R}_i \m{U}_i + \m{B}_i\tr \g{\Lambda}_i = \m{y}_{6i},
\quad 1\leq i\leq N .
\]
Previously, we have shown the existence of a constant $c$, independent of $N$,
such that $\|\m{X}_i\|_\infty \le c \|\m{y}\|_\infty$ and
$\|\g{\Lambda}_i\|_\infty \le c \|\m{y}\|_\infty$, $1 \le i \le N$.
By (A2) the smallest eigenvalue of $\m{R}_i$ is bounded from below by $\alpha$.
Hence, there also exists a constant $c$ such that
$\|\m{U}_i \|_\infty \le c \|\m{y}\|_\infty$.
Finally, the 3rd component of
$\nabla \C{T}^*[\m{X}, \m{U}, \g{\Lambda}] = \m{y}$ can be written
\[
\g{\Lambda}_0 - \m{TX}_N - \sum_{i=1}^N \omega_i
(\m{A}_i\tr \g{\Lambda}_i  + \m{Q}_i \m{X}_i + \m{S}_i \m{U}_i) = \m{y}_3.
\]
By the uniform bounds on $\|\m{X}_i\|_\infty$,
$\|\m{U}_i\|_\infty$, and $\|\g{\Lambda}_i\|_\infty$, $1 \le i \le N$,
there also exists a constant $c$, independent of $N$, such that
$\|\g{\Lambda}_0\|_\infty \le c\|\m{y}\|_\infty$.
We summarize these results as follows:

\begin{lemma}\label{inf-bounds}
If {\rm (A2)--(A3)} and {\rm (P1)--(P4)} hold,
then there exists a constant $c$,
independent of $N$, such that the solution
of $\nabla \C{T}^*[\m{X}, \m{U}, \g{\Lambda}] = \m{y}$ satisfies
\[
\|\m{X}\|_\infty + \|\m{U}\|_\infty + \|\g{\Lambda}\|_\infty \le
c \|\m{y}\|_\infty.
\]
\end{lemma}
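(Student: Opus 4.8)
The plan is to take the $\omega$-norm bounds on $\m{X}$ and $\m{U}$ supplied by Lemma~\ref{l2} and convert them into $\infty$-norm bounds, while simultaneously bounding $\g{\Lambda}$, by working through the six components of $\nabla\C{T}^*[\m{X},\m{U},\g{\Lambda}]=\m{y}$ in a carefully chosen order. The first two components are exactly the constraints of (\ref{QP}), and in particular $\m{X}_0=\m{y}_2$ gives $|\m{X}_0|\le\|\m{y}\|_\infty$ for free. I would then bound the remaining unknowns in the order $\m{X}$, then $\g{\Lambda}$, then $\m{U}$, and finally $\g{\Lambda}_0$, so that at each stage the quantities entering the right-hand side have already been controlled.

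For the state I would start from the closed form (\ref{solvestate}) for $\bar{\m{X}}$. The contribution of $\m{y}_1$ and $\m{y}_2$ is already bounded by $4\|\m{y}_1\|_\infty+2\|\m{y}_2\|_\infty$ in (\ref{h201}) via Lemma~\ref{feasiblestate}, so the only term needing care is $\bar{\m{D}}^{-1}\m{BU}$, for which only the $\omega$-norm of $\m{U}$ is available. The crux of the whole argument is the factorization (\ref{h103}), $\bar{\m{D}}^{-1}=[(\m{W}^{1/2}\m{D}_{1:N})^{-1}\otimes\m{I}_n]$, in which the weight factors are shifted onto $\m{U}$ to form $\m{U}_\omega$. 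Each entry of $\bar{\m{D}}^{-1}\m{BU}$ is then an inner product of a row of $(\m{W}^{1/2}\m{D}_{1:N})^{-1}\otimes\m{I}_n$, whose Euclidean length is at most $\sqrt2$ by (P2), with $\m{BU}_\omega$; the Schwarz inequality and Lemma~\ref{l2} give $\|\bar{\m{D}}^{-1}\m{BU}\|_\infty\le\sqrt2\,\|\m{B}\|_2\|\m{U}\|_\omega\le c\|\m{y}\|_\infty$. Since $\|(\m{I}-\bar{\m{D}}^{-1}\m{A})^{-1}\|_\infty\le2$ by (P1) and (A3), this yields $\|\m{X}\|_\infty\le c\|\m{y}\|_\infty$.

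For the costate I would combine the fourth and fifth components into (\ref{c1}) and solve for $\bar{\g{\Lambda}}$. The one genuinely Radau-specific obstacle is the factor $1/\omega_N$ multiplying $\m{TX}_N$: because the terminal Radau weight decays like $N^{-2}$, a naive estimate of this term would destroy the $N$-uniformity of the constant. This is precisely where the embedding of the terminal condition into the adjoint dynamics is resolved: the identity $\m{D}^{\ddag\;-1}\m{e}_N=-\omega_N\m{1}$ of (\ref{h999}), obtained by applying (\ref{h282}) to the constant polynomial, cancels the $1/\omega_N$ and produces the clean representation (\ref{h101}). I would then bound the right-hand side of (\ref{h101}) exactly as for the state: the $\m{SU}$ and $\m{Q}\bar{\m{X}}$ terms by the same weight-absorbing trick using (P4) together with the $\omega$-norm bounds of Lemma~\ref{l2} (this is essential for $\m{SU}$, since no $\infty$-bound on $\m{U}$ is yet available), the $\m{y}_4$ term by $\|(\m{D}^\ddag)^{-1}\|_\infty\le2$ from (P3), and the $\m{TX}_N$ term by the $\infty$-bound on $\m{X}$ already obtained, while the prefactor $(\m{I}+\bar{\m{D}}^{\ddag\;-1}\m{A}\tr)^{-1}$ has $\infty$-norm at most $2$ by (P3) and (A3); this gives $\|\bar{\g{\Lambda}}\|_\infty\le c\|\m{y}\|_\infty$.

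The last two estimates are routine. From the sixth component, $\m{U}_i=\m{R}_i^{-1}(\m{y}_{6i}-\m{S}_i\tr\m{X}_i-\m{B}_i\tr\g{\Lambda}_i)$; since (A2) bounds the smallest eigenvalue of each $\m{R}_i$ below by $\alpha$, so that $\|\m{R}_i^{-1}\|\le1/\alpha$ uniformly, and since $\m{X}_i$ and $\g{\Lambda}_i$ are already bounded by $c\|\m{y}\|_\infty$ while the data $\m{S}_i,\m{B}_i$ are uniformly bounded by continuity on a compact set, I obtain $\|\m{U}\|_\infty\le c\|\m{y}\|_\infty$. Finally, solving the third component for $\g{\Lambda}_0$ and using that the weights sum to $2$ together with the bounds just obtained gives $\|\g{\Lambda}_0\|_\infty\le c\|\m{y}\|_\infty$. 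Summing the three estimates proves the lemma. I expect the genuine difficulty to lie entirely in the $N$-uniformity rather than the algebra: the constant stays bounded only because the $L^2$-type $\omega$-bounds of Lemma~\ref{l2} are paired with the $N$-independent row-norm bounds (P2) and (P4) for the weighted inverse differentiation matrices, reinforced by the cancellation (\ref{h999}) that eliminates the diverging $1/\omega_N$ coupling unique to the Radau scheme.
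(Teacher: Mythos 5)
Your proposal is correct and follows essentially the same route as the paper: the same ordering ($\m{X}_0$ trivially, then $\bar{\m{X}}$ via (\ref{solvestate}) and the weight-absorbing factorization (\ref{h103}) with (P2) and Lemma~\ref{l2}, then $\bar{\g{\Lambda}}$ via (\ref{c1}), the cancellation (\ref{h999}) and (P3)--(P4), then $\m{U}$ from the sixth component using (A2), and finally $\g{\Lambda}_0$ from the third component). You correctly identify the two points the paper leans on — the $N$-independent row bounds on the weighted inverse differentiation matrices and the elimination of the $1/\omega_N$ coupling — so there is nothing further to add.
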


The proof of Theorem~\ref{maintheorem} follows exactly as in
\cite{HagerHouRao15b}.
By Lemma~\ref{inf-bounds},
$\mu = \|\nabla \C{T}(\m{X}^*, \m{U}^*, \g{\Lambda}^*)^{-1}\|_\infty$
is bounded uniformly in $N$.
Choose $\varepsilon$ small enough that $\varepsilon\mu < 1$.
When we compute the difference
$\nabla \C{T}(\m{X}, \m{U}, \g{\Lambda}) -
\nabla \C{T}(\m{X}^*, \m{U}^*, \g{\Lambda}^*)$ for
$(\m{X}, \m{U}, \g{\Lambda})$ near $(\m{X}^*, \m{U}^*, \g{\Lambda}^*)$
in the $\infty$-norm,
the $\m{D}$ and $\m{D}^\ddag$ constant terms cancel, and we are
left with terms involving the difference of
derivatives of $\m{f}$ or $C$ up to second order at nearby points.
By assumption, these second derivatives are uniformly continuous on
the closure of $\Omega$ and on a ball around $\m{x}^*(1)$.
Hence, for $r$ sufficiently small, we have
\[
\
\|\nabla\mathcal{T}(\m{X}, \m{U}, \g{\Lambda})-
\nabla\mathcal{T}(\m{X}^*, \m{U}^*, \g{\Lambda}^*)\|_\infty
\leq \varepsilon
\]
whenever
\begin{equation}\label{r-bound}
\max\{\|\m{X} -\m{X}^*\|_\infty, \|\m{U} -\m{U}^*\|_\infty,
\|\g{\Lambda} - \g{\Lambda}^*\|_\infty\} \le r.
\end{equation}
By Lemma~\ref{residuallemma}, it follows that
$\left\|\mathcal{T}\left(\m{X}^*, \m{U}^*, \g{\Lambda}^*\right)\right\|
\leq(1-\mu\varepsilon)r/\mu$
for all $N$ sufficiently large.
Hence, by Proposition~\ref{prop}, there exists a solution to
$\C{T}(\m{X},\m{U}, \g{\Lambda}) = \m{0}$ satisfying (\ref{r-bound}).
Moreover, by (\ref{abs}) and (\ref{delta}), the estimate (\ref{maineq}) holds.
To complete the proof, we need to show that
$(\m{X}, \m{U})$ is a local minimizer for (\ref{nlp}).
After replacing the KKT multipliers by the transformed quantities given
by (\ref{eq12}), the Hessian of the Lagrangian is a block
diagonal matrix with the following matrices forming the diagonal blocks:
\[
\begin{array}{ll}
\omega_i \nabla_{(x,u)}^2 H(\m{X}_i, \m{U}_i, \g{\Lambda}_i), &
1 \le i < N, \\[.05in]
\omega_i \nabla_{(x,u)}^2 H(\m{X}_i, \m{U}_i, \g{\Lambda}_i) +
\nabla_{(x,u)}^2 C(\m{X}_{i}), & i = N,
\end{array}
\]
where $H$ is the Hamiltonian.
In computing the Hessian, we assume that the $\m{X}$ and $\m{U}$
variables are arranged in the following order:
$\m{X}_1$, $\m{U}_1$, $\m{X}_2$, $\m{U}_2$, $\ldots$, $\m{X}_N$, $\m{U}_N$.
By (A2) the Hessian is positive definite when evaluated at
$(\m{X}^*, \m{U}^*, \g{\Lambda}^*)$.
By continuity of the second derivative of $C$ and $\m{f}$, and by the
convergence result (\ref{maineq}), we conclude that the Hessian of the
Lagrangian, evaluated at the solution of
$\C{T}(\m{X}, \m{U}, \g{\Lambda}) = \m{0}$ satisfying (\ref{r-bound}),
is positive definite for $N$ sufficiently large.
Hence, by the second-order sufficient optimality condition
\cite[Thm. 12.6]{NocedalWright2006}, $(\m{X},\m{U})$ is a strict
local minimizer of (\ref{nlp}).
This completes the proof of Theorem~\ref{maintheorem}.

\section{Numerical illustration}
\label{numerical}
Let us consider the unconstrained control problem previously introduced in
\cite{HagerHouRao15b}:
\begin{equation}\label{exprob}
\min \left\{ -x(2) :
\dot{x}(t) = \textstyle{\frac{5}{2}}(-x(t) + x(t)u(t) - u(t)^2),
\; x(0) = 1 \right\}
\end{equation}
The optimal solution and associated costate are
\begin{eqnarray*}
x^*(t) &=& 4/a(t), \quad a(t) = 1 + 3 \exp (2.5t), \\
u^*(t) &=& x^*(t)/2, \\
\lambda^* (t) &=& -a^2(t) \exp (-2.5t)/[\exp (-5) + 9 \exp(5) + 6].
\end{eqnarray*}
Figure~\ref{example} plots the logarithm of the sup-norm error in
the state, control, and costate as a function of the number of collocation
points.
Since these plots are nearly linear, the error behaves like
$c 10^{-\alpha N}$ where $\alpha \approx 0.6$ for either the state
or the control and $\alpha \approx 0.8$ for the costate.
In Theorem~\ref{maintheorem}, the dependence of the error on $N$
is somewhat complex due to the connection between $\eta$ and $N$.
As we increase $N$, we can also increase $\eta$ when the solution is
infinitely differentiable, however, the norm of the derivatives
also enters into the error bound as in (\ref{h2}).
Nonetheless, in cases where the solution derivatives can be bounded by
$c^\eta$ for some constant $c$, it is possible to deduce an exponential
decay rate for the error as observed in \cite[Sect. 2]{GargHagerRao10a}.
\begin{figure}
\begin{center}
\includegraphics[scale=.5]{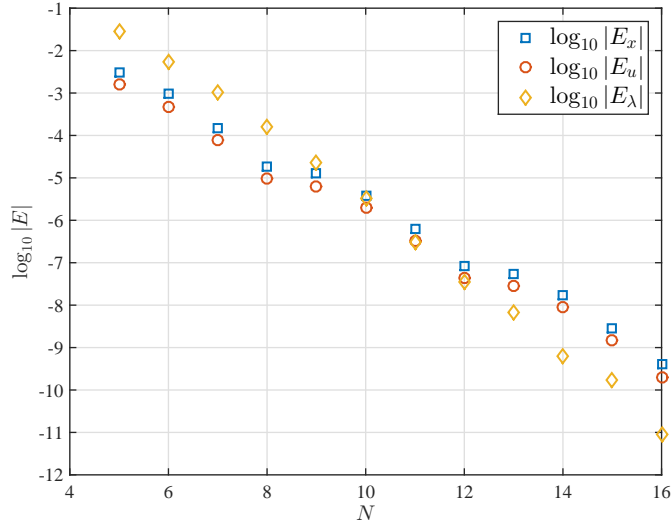}
\caption{The base 10 logarithm of the error in the sup-norm as a function of
the number of collocation points.
\label{example}}
\end{center}
\end{figure}
\section{Conclusions}
A Radau collocation scheme is analyzed for an unconstrained control problem.
For a problem with a smooth solution and a Hamiltonian
which satisfies a strong convexity assumption at a local minimizer of
the continuous problem, we show that the discrete approximation has a
local minimizer in a neighborhood of the continuous solution,
and as the number of collocation points increases, the distance in the
sup-norm between the discrete solution and the continuous solution is
$O(N^{2-\eta})$ when the continuous solution has $\eta+1$
continuous derivatives, $\eta \ge 3$,
and the number of collocation points $N$ is sufficiently large.
A numerical example is given which exhibits an exponential convergence rate.
\section{Appendix}
\label{appendix}
Before stating property (P3) in the Introduction,
we showed that ${\bf D}^\ddag$ is an invertible matrix.
In this section, we give an analytic formula for the inverse.
\begin{proposition}\label{deriv_exact}
The inverse of ${\bf D}^\ddag$ is given by
%
\[
\begin{array}{llll}
D^{\ddag \; -1}_{ij} &=& \omega_N M_j(1) + \int_1^{\tau_i} M_j (\tau) d\tau, &
1 \le i < N, \;\; 1 \le j < N, \\[.05in]
D^{\ddag \; -1}_{iN} &=& -\omega_N , & 1 \le i \le N, \\[.05in]
D^{\ddag \; -1}_{Nj} &=& \omega_N M_j (1) , & 1 \le j < N,
\end{array}
\]
where $M_j$, $1 \le j < N$, is the Lagrange interpolating basis
relative to the point set $\tau_1$, $\ldots$, $\tau_{N-1}$.
That is,
\[
M_j (\tau) =
\displaystyle\prod_{\substack{i=1\\ i\neq j}}^{N-1}
\frac{\tau-\tau_i}{\tau_j-\tau_i},\quad j=1,\ldots, N-1.
\]
\end{proposition}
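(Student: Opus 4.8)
The plan is to verify the stated formula columnwise, exploiting the fact that $\m{D}^\ddag$ is already known to be invertible. The central tool is the identity (\ref{h282}): if $p\in\C{P}_{N-1}$ and $\m{p}$ is the vector with entries $p(\tau_i)$, $1\le i\le N$, then $(\m{D}^\ddag\m{p})_i=\dot p(\tau_i)$ for $1\le i<N$ and $(\m{D}^\ddag\m{p})_N=\dot p(1)-p(1)/\omega_N$, where we recall $\tau_N=1$. Fixing $j$, I would seek a polynomial $p\in\C{P}_{N-1}$ with $\m{D}^\ddag\m{p}=\m{e}_j$; then $\m{p}=(\m{D}^\ddag)^{-1}\m{e}_j$ is exactly the $j$-th column of $(\m{D}^\ddag)^{-1}$, so it suffices to show that its entries $p(\tau_i)$ coincide with the claimed values.

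For the column $j=N$, the requirement reads $\dot p(\tau_i)=0$ for $1\le i<N$ together with $\dot p(1)-p(1)/\omega_N=1$. Since $\dot p$ has degree at most $N-2$ and vanishes at the $N-1$ distinct points $\tau_1,\ldots,\tau_{N-1}$, it is identically zero, so $p$ is constant; the terminal condition then forces $p\equiv-\omega_N$, reproducing $D^{\ddag\,-1}_{iN}=-\omega_N$ for all $i$. For a column $j<N$, the requirement is $\dot p(\tau_i)=\delta_{ij}$ for $1\le i<N$ together with $\dot p(1)=p(1)/\omega_N$. Now $\dot p$ has degree at most $N-2$ and its values at the $N-1$ nodes $\tau_1,\ldots,\tau_{N-1}$ are prescribed, so Lagrange interpolation determines it uniquely as $\dot p=M_j$. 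Integrating from $\tau=1$ gives $p(\tau)=p(1)+\int_1^\tau M_j(s)\,ds$, a polynomial of degree at most $N-1$, so that (\ref{h282}) legitimately applies. Imposing the terminal condition $\dot p(1)=M_j(1)=p(1)/\omega_N$ pins down the free constant as $p(1)=\omega_N M_j(1)$.

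Evaluating this $p$ at $\tau_i$ yields $p(\tau_i)=\omega_N M_j(1)+\int_1^{\tau_i}M_j(s)\,ds$, which is the claimed value of $D^{\ddag\,-1}_{ij}$ for $i<N$; for $i=N$ the integral $\int_1^{\tau_N}M_j$ vanishes because $\tau_N=1$, leaving $D^{\ddag\,-1}_{Nj}=\omega_N M_j(1)$, exactly as stated. In each of the three cases one checks directly that $\m{D}^\ddag\m{p}=\m{e}_j$ holds by construction: for $j<N$ the first $N-1$ rows give $M_j(\tau_i)=\delta_{ij}$, and the last row gives $M_j(1)-\omega_N M_j(1)/\omega_N=0$, while for $j=N$ the first $N-1$ rows vanish and the last gives $-(-\omega_N)/\omega_N=1$.

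There is no genuine obstacle here; the one point requiring care is the bookkeeping that fixes the constant of integration. The terminal row of $\m{D}^\ddag$ couples $\dot p(1)$ with $p(1)/\omega_N$—precisely the feature distinguishing $\m{D}^\ddag$ from the plain differentiation matrix $\m{D}^\dagger$—so this coupling must be used to determine $p(1)$, and one must confirm at the end that the resulting $p$ lies in $\C{P}_{N-1}$ so that the invocation of (\ref{h282}) is valid. Both checks are immediate, and assembling the three cases completes the verification.
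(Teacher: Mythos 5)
Your proof is correct and follows essentially the same route as the paper: both arguments rest on the identity (\ref{h282}) applied to the antiderivative of the Lagrange basis $M_j$ for the nodes $\tau_1,\ldots,\tau_{N-1}$, with the terminal row of $\m{D}^\ddag$ fixing the constant of integration. The only organizational difference is that you solve $\m{D}^\ddag\m{p}=\m{e}_j$ directly for each column (re-deriving the last column $-\omega_N\m{1}$ from the constant-polynomial case), whereas the paper first inverts (\ref{h282}) and invokes the previously established identity $\m{D}^{\ddag\,-1}\m{e}_N=-\omega_N\m{1}$ from (\ref{h999}); the computations are otherwise identical.
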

\smallskip
\begin{proof}
The relation (\ref{h282}) holds for any polynomial $p$ of degree at
most $N-1$.
Let $\dot{\m{p}} \in \mathbb{R}^N$ denote the vector with
$i$-th component $\dot{p}(\tau_i)$.
In vector form, the system of equations (\ref{h282}) can be expressed
${\bf D}^\ddag{\bf p}=$ $\dot{\bf p} - \m{e}_N p(1)/\omega_N$.
Multiply by
$\m{D}^{\ddag \; -1}$ and exploit the identity
$\m{D}^{\ddag \; -1} \m{e}_N = -\omega_N \m{1}$ of (\ref{h999})
to obtain
\begin{equation}\label{h998}
{\bf D}^{\ddag \; -1} \dot{\m{p}} = \m{p} - \m{1}p(1) .
\end{equation}

Since $\dot{p}$ is a polynomial of degree at most $N-2$,
we can only specify the derivative of $p$ at $N-1$ distinct points.
Given any $j$ satisfying $1 \le j < N$, let us insert in (\ref{h998})
a polynomial $p \in \C{P}_{N-1}$ satisfying
\[
\dot{p} (\tau_j) = 1 \quad \mbox{and} \quad
\dot{p}(\tau_i) = 0
\mbox{ for all } i < N, \; \; i \ne j.
\]
A specific polynomial with this property is
\begin{equation}\label{h996}
p(\tau) = \int_{1}^{\tau} M_j(\tau) d\tau.
\end{equation}
Since $p_N = p(1) = 0$, the last component of the right side of
(\ref{h998}) vanishes to give the relation
$D_{Nj}^{\ddag \; -1} + D_{NN}^{\ddag \; -1} \dot{p}(1) = 0$.
In (\ref{h999}) we showed that all the elements in the last column of
$\m{D}^{\ddag \; -1}$ are equal to $-\omega_N$, and
by (\ref{h996}), $\dot{p}(1) = M_j (1)$.
Hence, we obtain the relation
\begin{equation}\label{h997}
D_{Nj}^{\ddag \; -1} =
-D_{NN}^{\ddag \; -1} \dot{p}(1) =
\omega_N \dot{p}(1) =
\omega_N M_j (1), \;\; 1 \le j < N.
\end{equation}

Finally, let us consider $D_{ij}^{\ddag \; -1}$ for $i < N$ and $j < N$.
We combine the $i$-th component of (\ref{h998}) for $i < N$ with
(\ref{h996}) to obtain
\begin{equation}\label{h995}
({\bf D}^{\ddag \; -1} \dot{\m{p}})_i = \int_{1}^{\tau_i} M_j (\tau) d\tau .
\end{equation}
Recall that all components of $\dot{\m{p}}$ vanish except for the $j$-th,
which is 1, and the $N$-th, which is $M_j(1)$ by (\ref{h996}).
Hence, (\ref{h995}) and the fact that
the elements in the last column of
$\m{D}^{\ddag \; -1}$ are all $-\omega_N$ yield
\[
D_{ij}^{\ddag \; -1} = \int_{1}^{\tau_i} M_j (\tau) d\tau -
D_{iN}^{\ddag \; -1} M_j(1) =
\omega_N M_j (1) + \int_{1}^{\tau_i} M_j (\tau) d\tau
\]
This completes the proof.
\end{proof}

As noted in the Introduction, the elements in the
last row of $\m{D}_{1:N}^{-1}$ are positive and sum to 2,
and the last row of
$\m{D}_{1:N}^{-1}\m{W}^{-1/2}$ is positive and has Euclidean
norm $\sqrt{2}$.
Numerically, we observe that the absolute row sums for the first
$N-1$ rows of $\m{D}_{1:N}^{-1}$ are always less than 2, while
the Euclidean norm of the first $N-1$ rows of
$\m{D}_{1:N}^{-1}\m{W}^{-1/2}$ are always less than $\sqrt{2}$.
Properties (P3) and (P4) are less tight in the sense that the
bounds only hold as equalities asymptotically.
Tables~\ref{P3} and \ref{P4} show $\|\m{D}^{\ddag \; -1}\|_\infty$
and the maximum Euclidean norm of the rows of
$\m{D}^{\ddag \; -1}\m{W}^{-1/2}$ for an increasing sequence of dimensions.
\begin{table}[ht]
\centering
\begin{tabular}{c|c|c|c|c|c|c}
\hline\hline
$N$ & 25 & 50 & 75 & 100 & 125 & 150 \\
\hline
norm & 1.995376 & 1.998844 & 1.999486 & 1.999711 & 1.999815 & 1.999871  \\
\hline\hline
$N$ & 175 & 200 & 225 & 250 & 275 & 300 \\
\hline
norm & 1.999906 & 1.999928 & 1.999943 & 1.999954 & 1.999962 & 1.999968\\
\hline
\end{tabular}
\vspace*{.1in}
\caption{$\|{\bf D}^{\ddag\; -1}\|_\infty$
\label{P3}}
\end{table}
\begin{table}[ht]
\centering
\begin{tabular}{c|c|c|c|c|c|c}
\hline\hline
$N$ & 25 & 50 & 75 & 100 & 125 & 150 \\
\hline
norm & 1.412209 & 1.413691 & 1.413982 & 1.414083 & 1.414130 & 1.414156 \\
\hline\hline
$N$ & 175 & 200 & 225 & 250 & 275 & 300 \\
\hline
norm & 1.414171 & 1.414181 & 1.414188 & 1.414193 & 1.414196 & 1.414199 \\
\hline
\end{tabular}
\vspace*{.1in}
\caption{Maximum Euclidean norm for the rows of
$[\m{W}^{1/2}{\bf D}^\ddag]^{-1}$
\label{P4}}
\end{table}

\newpage

\bibliographystyle{siam}
\bibliography{library}
\end{document}